\newtheorem{remark}{Remark}
\newtheorem{lemma}[remark]{Lemma}
\newtheorem{theorem}[remark]{Theorem}
\newtheorem{proposition}[remark]{Proposition}
\newtheorem{corollary}[remark]{Corollary}
\newtheorem{claim}[remark]{Claim}
\newcommand{\adim}{\operatorname{\mathrm{adim}}}
\title{The $k$-metric dimension of the lexicographic product of graphs}
\author{A. Estrada-Moreno$^{(1)}$, I. G. Yero$^{(2)}$, J. A. Rodr\'{i}guez-Vel\'{a}zquez$^{(1)}$\\
$^{(1)}${\small Departament d'Enginyeria Inform\`atica i Matem\`atiques,}\\
{\small Universitat Rovira i Virgili,}  {\small Av. Pa\"{\i}sos
Catalans 26, 43007 Tarragona, Spain.} \\{\small
 alejandro.estrada\@@urv.cat, juanalberto.rodriguez\@@urv.cat}\\
    $^{(2)}${\small Departamento de Matem\'aticas, Escuela Polit\'ecnica Superior de Algeciras}\\
{\small Universidad de C\'adiz,} {\small
Av. Ram\'on Puyol s/n, 11202 Algeciras, Spain.} \\ {\small
ismael.gonzalez\@@uca.es}
}
\begin{document}
\maketitle

\begin{abstract}
Given a simple and connected  graph $G=(V,E)$, and a positive integer $k$, a set $S\subseteq V$ is said to be a $k$-metric generator for $G$, if  for any pair of different vertices $u,v\in V$, there exist at least $k$ vertices $w_1,w_2,\ldots,w_k\in S$ such that $d_G(u,w_i)\ne d_G(v,w_i)$, for every $i\in \{1,\ldots,k\}$, where $d_G(x,y)$ denotes the distance between $x$ and $y$. The minimum cardinality of a $k$-metric generator is the $k$-metric dimension of $G$. A set $S\subseteq V$ is a $k$-adjacency generator for $G$ if any two different vertices $x,y\in V(G)$ satisfy $|((N_G(x)\triangledown N_G(y))\cup\{x,y\})\cap S|\ge k$, where $N_G(x)\triangledown N_G(y)$ is the symmetric difference of the neighborhoods of $x$ and $y$.
The minimum cardinality of any $k$-adjacency generator is the $k$-adjacency dimension of $G$. In this article we obtain tight bounds and closed formulae for the $k$-metric dimension of the lexicographic product of graphs in terms of the $k$-adjacency dimension of the factor graphs.
\end{abstract}

{\it Keywords:}  $k$-metric generator; $k$-metric dimension; $k$-adjacency dimension; lexicographic product graphs.

{\it AMS Subject Classification numbers:}   05C12; 05C76

\section{Introduction}

Locating sets in graphs were introduced in \cite{Slater1975}, in connection to some usefulness of such a sets into long range aids to navigation. Moreover, the same structure was defined independently in \cite{Harary1976} under the name of resolving sets. Further on, in \cite{Khuller1996}, the (locating or resolving) sets were renamed as metric generators which is a more intuitive definition, according to the role they play inside the graph. This last name arise from the concept of metric generators of metric spaces. That is, given a simple and connected graph $G=(V,E)$, a vertex $v\in V$ is said to \emph{distinguish} two vertices $x$ and $y$ if $d_G(v,x)\ne d_G(v,y)$, where $d_G(x,y)$ is the length of a shortest path between $x$ and $y$. A set $S\subset V$ is said to be a \emph{metric generator} for $G$ if any pair of vertices of $G$ is distinguished by some element of $S$. In this sense, if we consider the metric $d_G:V\times V\rightarrow \mathbb{N}$, then $(V,d_G)$ is clearly a metric space. A metric generator with the smallest possible cardinality among all the metric generators for $G$ is called a \emph{metric basis} of  $G$, and its cardinality the \emph{metric dimension} of $G$, denoted by $\dim(G)$.

A model of application of metric generators to navigation of robots in networks was also described in  \cite{Khuller1996}. In this model, the robot moves from node to node of a graph and can locate themselves throughout a uniquely distinctive labeled ``landmark'' node set $S$ of the graph. It is assumed that a robot moving over the graph is using the distances to the landmarks  to ``knows'' its position in each moment, \emph{i.e.}, if a robot knows the distances to the vertices of $S$, then its position on the graph is uniquely determined. With this purpose, the set of landmarks $S$ is a metric generator for the graph modeling the network topology. A very important goal is then to minimize the number of landmarks needed, and to determine where they should be located, so that the distances to the landmarks uniquely determine the robot's position on the graph. Solutions to these questions are produced by the metric dimension and some metric basis of the graph, respectively. Although these concepts solve the above described problem, there is a weakness which has not been taken into account, \emph{i.e.}, the possible uniqueness of a landmark which could distinguish some pairs of vertices. For more realistic settings, in \cite{Estrada-Moreno2013}, the concept of $k$-metric generator was introduced and some preliminary properties of it were studied. Moreover, in the article \cite{Yero2013c}, some complexity issues on that new parameter were considered.

Let $G=(V,E)$ be a simple and connected graph. A set $S\subseteq V$ is said to be a \emph{$k$-metric generator} for $G$ if any pair of vertices of $G$ is distinguished by at least $k$ elements of $S$, {\em i.e.}, for any pair of different vertices $u,v\in V$, there exist at least $k$ vertices $w_1,w_2,\ldots,w_k\in S$ such that
\begin{equation}\label{conditionDistinguish}
d_G(u,w_i)\ne d_G(v,w_i),\; \mbox{\rm for every}\; i\in \{1,\ldots,k\}.
\end{equation}
A $k$-metric generator of minimum cardinality in $G$ is a \emph{$k$-metric basis} and its cardinality the $k$-\emph{metric dimension} of $G$, which is denoted by $\dim_{k}(G)$. Note that every $k$-metric generator $S$ satisfies that $|S|\geq k$ and, if $k>1$, then $S$ is also a $(k-1)$-metric generator. Moreover, $1$-metric generators are the standard metric generators (resolving sets or locating sets as defined in \cite{Harary1976} or \cite{Slater1975}, respectively). In practice, the problem of checking if a set $S$ is a $1$-metric generator reduces to check condition (\ref{conditionDistinguish}) only for those vertices $u,v\in V- S$, as every vertex in $S$ is distinguished at least by itself. Also, if $k=2$, then condition (\ref{conditionDistinguish}) must be checked only for those pairs having at most one vertex in $S$, since two vertices of $S$ are distinguished at least by themselves. Nevertheless, if $k\ge 3$, then condition (\ref{conditionDistinguish}) must be checked for every pair of different vertices of the graph.

It is clear that not for every value $k$ there exists a $k$-metric generator for a graph $G$. This fact allows to give the following definition. A connected graph $G$ is said to be a \emph{$k'$-metric dimensional graph} if $k'$ is the largest integer such that there exists a $k'$-metric basis \cite{Estrada-Moreno2013}. Notice that if $G$ is a $k'$-metric dimensional graph, then for each positive integer $k\le k'$, there exists at least one $k$-metric basis of $G$, \textit{i.e.,} $\dim_{k}(G)$ makes sense for $k\in \{1,\ldots,k'\}$.

It was shown in  \cite{Yero2013c}  that the decision problem, regarding whether the $k$-metric dimension of a graph is less than an specific integer, is NP-complete (the case $k=1$ was previously studied in \cite{Khuller1996}). It is therefore motivating to find the $k$-metric dimension for special classes of graphs or obtaining good bounds on this
invariant. Specifically, for the case of product graphs, it would be desirable to reduce the problem of computing the $k$-metric dimension of a product graph into computing the $k$-metric dimension of the factor graphs. Early studies about the metric dimension of product graphs were initiated in \cite{Caceres2007,Peters-Fransen2006}, where several tight bounds and closed formulae for the metric dimension of Cartesian product graphs were presented. After that, the metric dimension of corona graphs, rooted product graphs, lexicographic product graphs, direct product graphs and strong product graphs was studied in \cite{Yero2011}, \cite{Yero2013b}, \cite{JanOmo2012,Saputro2013}, \cite{Kuziak2015} and \cite{Rodriguez-Velazquez2013a}, respectively. Also, for the $k$-metric dimension in product graphs, some studies in the case of corona product of graphs were presented in \cite{Estrada-Moreno2013corona}. In this paper we continue with the study of the $k$-metric dimension of product graphs, specifically we consider the lexicographic product. For more information on product graph definitions we suggest the book \cite{Hammack2011}.

The \emph{lexicographic product} of a graph $G$ of order $n$ and a family of graphs $\mathcal{H}=\{H_1,H_2,\ldots,H_n\}$, which is denoted by $G\circ\mathcal{H}$, is the graph with vertex set $\bigcup_{v_i\in V(G)}\left(\{v_i\}\times V(H_i)\right)$, where $(a,v)$ is adjacent to $(b,w)$ whenever $ab\in E(G)$, or $a=b$ and $vw\in E(H_i)$ for every $H_i\in\mathcal{H}$. Note that $G\circ\mathcal{H}$ is connected if and only if $G$ is connected. Thus, throughout this paper, we consider that $G$ is connected graph of order $n$ with $V(G)=\{u_1,\ldots,u_n\}$. Further, every $H_i\in\mathcal{H}$ is a graph of order $n_i$ with $V(H_i)=\{v_1^i,\ldots,v_{n_i}^i\}$. Note that this approach of lexicographic product is a natural generalization of the standard lexicographic product of graphs, and therefore its properties too. For more information on the lexicographic product of two graphs we suggest \cite{Hammack2011}. If for every $H_i\in\mathcal{H}$ holds that $H_i\cong H$, then we will use the notation $G\circ H$ (as in the standard case) instead of $G\circ\mathcal{H}$ and we will refer $H_i$ as $i^{\text{th}}$-copy of $H$.

The article is structured in the following way. In Section \ref{sect-prelim} we give some preliminary known results which are necessary for the rest of the following sections. Section \ref{sectionDimensionalLexi} is devoted to compute the value $k$ for which any lexicographic product graph is $k$-metric dimensional or equivalently we give the suitable interval of integer numbers in which the $k$-metric dimension of lexicographic product graphs makes sense. In Section \ref{sect-k-dim} we give tight bounds and closed formulae for the $k$-metric dimension of lexicographic product graphs and we finish the article with a Conclusion section.

\section{Preliminary concepts}\label{sect-prelim}

In this section we include some definitions and known results that are necessary to prove our main results. If two vertices $u,v$ are adjacent in $G=(V,E)$, then we write $u\sim v$ or  $uv\in E(G)$. Given  $x\in V(G)$, we define $N_{G}(x)$ as the \emph{open neighborhood} of $x$ in $G$, \textit{i.e.},  $N_{G}(x)=\{y\in V(G):x\sim y\}$. The \emph{closed neighborhood}, denoted by $N_{G}[x]$, equals $N_{G}(x)\cup \{x\}$. If there is no  ambiguity, we simply write  $N(x)$ or $N[x]$. We also define $\delta(v)=|N(v)|$ as the \emph{degree} of vertex $v$, as well as, $\delta(G)=\min_{v\in V(G)}\{\delta(v)\}$ and $\Delta(G)=\max_{v\in V(G)}\{\delta(v)\}$. For a non-empty set $S \subseteq V(G)$, and a vertex $v \in V(G)$, $N_S(v)$ denotes the set of neighbors that $v$ has in $S$, {\it i.e.}, $N_S(v) = S\cap N(v)$. As usual, we denote by $A\triangledown B=(A\cup B)- (A\cap B)$ the \emph{symmetric difference} of two sets  $A$ and $B$. We use $\overline{G}$ for the \emph{complement} of $G$.

Two vertices $x,y$ are called \emph{false twins} if $N(x)=N(y)$, and $x,y$ are called \emph{true twins} if $N[x]=N[y]$. In particular, if $G$ contains more than
one isolated vertex, then they are false twin vertices. Two different vertices $x,y$ are \emph{twins} if they are either false twin vertices or true twin vertices. We also say that a vertex $x$ is a twin, if there exists other vertex $y$ such that $x,y$ are twins. In concordance with that, we define the \textit{twin  equivalence relation} ${\cal R}$ on $V(G)$ as follows:
$$x {\cal R} y \longleftrightarrow N_G(x)-\{y\}=N_G(y)-\{x\}.$$
We have three possibilities for each twin equivalence class $U$:
\begin{enumerate}[(a)]
\item $U$ is a singleton twin equivalence class, or
\item $U$ is a false twin equivalence class, \textit{i.e.}, $N_G(x)=N_G(y)$, for any $x,y\in U$ (and case (a) does not apply), or
\item $U$ is a true twin equivalence class, \textit{i.e.}, $N_G[x]=N_G[y]$, for any $x,y\in U$ (and case (a) does not apply).
\end{enumerate}
If all twin equivalence classes of a graph $G$ are singletons, then we say that $G$ is a \textit{twins free graph}. If $G$ does not have any true (false) twin equivalence class, then we say that $G$ is a \textit{true $($false$)$ twins free graph}.

Now we define a graph metric which will be  useful throughout the article. Given a connected graph $G$ it is defined the metric $d_{G,2}: V(G)\times V(G)\longmapsto \mathbb{N}$ as $$d_{G,2}(x,y)=\min\{d_G(x,y),2\}.$$
The following claim on vertex distances in the lexicographic product of a graph and a family of graphs, where we use the metric above, is obtained analogously to the standard case of lexicographic product of two graphs \cite{Hammack2011}.

\begin{claim}\label{claimLexi}{\rm \cite{Hammack2011}}
Let $G$ be a connected graph of order $n$ and let $\mathcal{H}$ be a family of $n$ graphs. Then the following statements hold,
\begin{enumerate}[{\rm (i)}]
\item  $d_{G\circ \mathcal{H}}((u_i,v_l^i),(u_j,v_m^j)) = d_{G}(u_i,u_j)$ for $i\ne j$ and $1\le i,j\le n$.
\item  $d_{G\circ \mathcal{H}}((u_i,v_l^i),(u_i,v_m^i)) = d_{H_i,2}(v_l^i,v_m^i)$ for $1\le i\le n$.
\end{enumerate}
\end{claim}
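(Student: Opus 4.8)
The plan is to treat the two parts through the natural projection $\pi\colon V(G\circ\mathcal{H})\to V(G)$ given by $\pi(u_i,v)=u_i$, together with a path-lifting argument. The structural fact I would rely on is that if $(a,v)$ and $(b,w)$ are adjacent in $G\circ\mathcal{H}$, then either $ab\in E(G)$ or $a=b$; hence $\pi$ maps each edge of $G\circ\mathcal{H}$ either to an edge of $G$ or to a single vertex. This is exactly what makes distances in the product comparable with distances in $G$.

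For part (i) I would prove the two inequalities separately. For the lower bound, take a shortest path $P$ in $G\circ\mathcal{H}$ from $(u_i,v_l^i)$ to $(u_j,v_m^j)$ and apply $\pi$ vertex by vertex; this gives a sequence in $V(G)$ in which consecutive entries are equal or adjacent, and after deleting consecutive repetitions one obtains a genuine walk in $G$ joining $u_i$ and $u_j$ whose length does not exceed that of $P$. Therefore $d_G(u_i,u_j)\le d_{G\circ\mathcal{H}}((u_i,v_l^i),(u_j,v_m^j))$. For the reverse inequality I would lift a geodesic: if $u_i=w_0,w_1,\dots,w_t=u_j$ is a shortest path in $G$ with $t=d_G(u_i,u_j)$, then since $w_sw_{s+1}\in E(G)$ forces $(w_s,\ast)\sim(w_{s+1},\ast)$ regardless of the second coordinates, the sequence $(w_0,v_l^i),(w_1,x_1),\dots,(w_{t-1},x_{t-1}),(w_t,v_m^j)$ with arbitrary intermediate fibers is a path of length $t$ in $G\circ\mathcal{H}$. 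Combining the two bounds yields the equality.

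For part (ii), the vertices $(u_i,v_l^i)$ and $(u_i,v_m^i)$ share their first coordinate, so by the adjacency rule they are adjacent in $G\circ\mathcal{H}$ if and only if $v_l^iv_m^i\in E(H_i)$. If $v_l^iv_m^i\in E(H_i)$, then both sides equal $1$. Otherwise the two vertices are nonadjacent and distinct, so their distance is at least $2$, while $d_{H_i,2}(v_l^i,v_m^i)=2$ by definition of the truncated metric; to attain the value $2$ I would route through a neighboring fiber. Since $G$ is connected of order at least two, $u_i$ has a neighbor $u_j$ in $G$, and then for any $x\in V(H_j)$ the walk $(u_i,v_l^i),(u_j,x),(u_i,v_m^i)$ has length $2$, giving distance exactly $2$.

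I expect the main obstacle to be conceptual rather than computational: it is the bookkeeping in the lower bound of (i), namely verifying that collapsing the repeated vertices produced by same-fiber edges still leaves a valid walk of no greater length, and recognizing that this collapse is precisely what rules out any shortcut shorter than $d_G(u_i,u_j)$. Once the projection and lifting are in place, the case analysis in (ii) is routine; its only sensitive point is the use of a neighbor of $u_i$, which is where the connectedness of $G$ (and the implicit hypothesis $n\ge 2$) enters, and without which part (ii) would fail for the degenerate product $G\cong K_1$.
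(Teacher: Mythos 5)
Your proof is correct, but there is nothing in the paper to compare it against: the paper does not prove this claim at all, it simply cites it from the Handbook of Product Graphs \cite{Hammack2011}, remarking that the version for a family $\mathcal{H}$ of fiber graphs ``is obtained analogously to the standard case.'' Your projection--lifting argument is the standard proof of that cited fact, and it is sound in both directions: the projection $\pi$ sends each edge of $G\circ\mathcal{H}$ to an edge of $G$ or collapses it, so deleting consecutive repetitions in $\pi(P)$ yields a walk in $G$ of no greater length, giving the lower bound in (i); lifting a geodesic of $G$ with arbitrary second coordinates gives the matching upper bound; and in (ii) the detour $(u_i,v_l^i),(u_j,x),(u_i,v_m^i)$ through an adjacent fiber caps the distance at $2$ exactly when the two vertices are nonadjacent in $H_i$. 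You are also right to flag the one hypothesis the claim leaves implicit: part (ii) needs $G$ to have a vertex adjacent to $u_i$, i.e.\ $n\ge 2$, since for $G\cong K_1$ the product is just $H_1$ and its distance is $d_{H_1}$, not the truncated metric $d_{H_1,2}$. This is consistent with the rest of the paper, where every result assumes $n\ge 2$, so your proof fills in, correctly and completely, a step the authors chose to outsource to a citation.
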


To continue with some necessary results, we give the following concepts from \cite{Estrada-Moreno2014a}. A set $S\subseteq V(G)$ is a $k$-\textit{adjacency generator} for $G$, if for every two vertices $x,y\in V(G)$ there exist at least $k$ vertices $w_1,w_2,\ldots,w_k\in S$, such that
$$d_{G,2}(x,w_i)\ne d_{G,2}(y,w_i),\; \mbox{\rm for every}\; i\in \{1,\ldots,k\},$$
\textit{i.e}, $S$ is a $k$-\textit{adjacency generator} for $G$ if for every two vertices $x,y\in V(G)$ it holds $$|((N_G(x)\triangledown N_G(y))\cup\{x,y\})\cap S|\ge k.$$ A minimum $k$-adjacency generator is called a $k$-\textit{adjacency basis} of $G$ and its cardinality, the $k$-\textit{adjacency dimension} of $G$, is denoted by $\adim_k(G)$. A graph $G$ is said to be a \emph{$k'$-adjacency dimensional graph} if $k'$ is the largest integer such that there exists a $k'$-adjacency basis.

The $k$-adjacency dimension of some specific families of $k'$-adjacency dimensional graphs was studied in \cite{Estrada-Moreno2014a}. As an example, next we present the cases of paths and cycles\footnote{Notice that paths are $3$-adjacency dimensional and cycles are $4$-adjacency dimensional.}, since we will further on use them in this article. On the other hand, since $N_G(x)\triangledown N_G(y)=N_{\overline{G}}(x)\triangledown N_{\overline{G}}(y)$, for any non-trivial graph $G$, it is straightforward to deduce that $$\adim_k(G)=\adim_k(\overline{G})$$ for $\displaystyle k\in\left\{1,2,\ldots,\min_{x,y\in V(G)}\left\{\left|(N_G(x)\triangledown N_G(y))\cup\{x,y\}\right|\right\}\right\}$. It is clear that every $k$-adjacency generator for a graph $G$ is also a $k$-metric generator for $G$, and thus, $\dim_k(G)\le\adim_k(G)$.

\begin{proposition}\label{value-adj-Paths-Cycles}{\rm \cite{Estrada-Moreno2014a}}
For any integer $n\ge 4$,
\begin{enumerate}[{\rm (i)}]
\item $\adim_2(P_n)=\adim_2(\overline{P}_n)=\lceil\frac{n+1}{2}\rceil$,
\item $\adim_3(P_n)=\adim_3(\overline{P}_n)=n-\lfloor\frac{n-4}{5}\rfloor$.
\end{enumerate}
Also, for any $n\ge 5$,
\begin{enumerate}[{\rm (i')}]
\item $\adim_2(C_n)=\adim_2(\overline{C}_n)=\left\lceil\frac{n}{2}\right\rceil$,
\item $\adim_3(C_n)=\adim_3(\overline{C}_n)= n-\left\lfloor\frac{n}{5}\right\rfloor$,
\item $\adim_4(C_n)=\adim_4(\overline{C}_n)=n$.
\end{enumerate}
\end{proposition}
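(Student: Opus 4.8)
The plan is first to reduce everything to $P_n$ and $C_n$. The proposition ranges only over $k\le 3$ for paths and $k\le 4$ for cycles, which is exactly the range in which the identity $\adim_k(G)=\adim_k(\overline G)$ from the excerpt applies (the threshold $\min_{x,y}|(N_G(x)\triangledown N_G(y))\cup\{x,y\}|$ is $3$ for $P_n$ and $4$ for $C_n$, matching the footnote that $P_n$ is $3$-adjacency and $C_n$ is $4$-adjacency dimensional). Hence each complement formula is a free consequence of the corresponding one for $P_n$ or $C_n$, and I would prove only the latter. For a pair of distinct vertices $x,y$ write $D(x,y)=(N(x)\triangledown N(y))\cup\{x,y\}$, so that $S$ is a $k$-adjacency generator precisely when $|D(x,y)\cap S|\ge k$ for every pair. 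I would then pass to the complement $W=V\setminus S$: minimizing $|S|$ amounts to maximizing $|W|$ subject to $|D(x,y)\cap W|\le |D(x,y)|-k$ for all pairs, with $\adim_k(G)=|V(G)|-\max|W|$. The problem thus becomes a packing problem for $W$ controlled by the smallest distinguishing sets.

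The next step is to compute these sets. Labelling the vertices $v_1,\dots,v_n$ along the path/cycle (the hypotheses $n\ge 4$ and $n\ge 5$ guarantee twin-freeness, so the sets have the sizes below uniformly), the only binding pairs are the adjacent ones, with $D(v_i,v_{i+1})=\{v_{i-1},v_i,v_{i+1},v_{i+2}\}$ (four consecutive vertices), and the distance-$2$ ones, with $D(v_i,v_{i+2})=\{v_{i-1},v_i,v_{i+2},v_{i+3}\}$ (four vertices spanning five consecutive positions, with $v_{i+1}$ omitted). Pairs at distance $\ge 3$ have $|D|\ge 4$ with slack, and near the endpoints of $P_n$ these sets shrink to size $3$; only the two families above truly constrain $W$.

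For the lower bound I would combine the two constraint families into a local density statement and then average. The adjacent-pair constraint says any four consecutive vertices contain at most $4-k$ elements of $W$. For $k=2$ this already forces at most two of every four consecutive vertices in $W$; averaging over all $n$ length-$4$ windows on $C_n$ gives $4|W|\le 2n$, i.e.\ $|W|\le\lfloor n/2\rfloor$. For $k=3$ the adjacent-pair sets forbid two $W$-vertices within distance $3$, and the key extra input is that the distance-$2$ set $D(v_i,v_{i+2})$ spans five positions, so two $W$-vertices exactly $4$ apart would both lie in it; together these force at most one element of $W$ in any five consecutive vertices, and averaging gives $5|W|\le n$, i.e.\ $|W|\le\lfloor n/5\rfloor$. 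For $k=4$ on $C_n$ the $|D|=4$ pairs force $D(x,y)\subseteq S$ for all of them, whose union is $V$, so $W=\emptyset$ and $\adim_4(C_n)=n$. The matching upper bound is obtained by exhibiting explicit $W$: alternate vertices $v_1,v_3,v_5,\dots$ for $k=2$, and a period-$5$ set $v_1,v_6,v_{11},\dots$ for $k=3$; I would verify that every binding $D(x,y)$ retains at least $k$ vertices of $S$ (non-binding pairs being automatic), adjusting near wrap-around on $C_n$ and near the degree-one endpoints on $P_n$.

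The main obstacle is twofold. Conceptually, the crux is the period-$5$ phenomenon for $k=3$: one must notice that $D(v_i,v_{i+2})$ occupies five consecutive positions, which is exactly what upgrades the naive ``gap $\ge 4$'' bound coming from adjacent pairs to the correct ``gap $\ge 5$''; missing this yields the wrong density. Technically, the harder part is the endpoint bookkeeping for $P_n$, where the shrinking of $D$ at degree-one vertices changes the extremal packing locally and is responsible for the precise floors and ceilings, converting $\lfloor n/5\rfloor$ and $\lfloor n/2\rfloor$ into the shifted $\lfloor(n-4)/5\rfloor$ and $\lfloor(n-1)/2\rfloor$. I expect this to require splitting into residue classes of $n$ modulo $5$ (for $k=3$) and modulo $2$ (for $k=2$) and checking the endpoint-adjusted constructions case by case.
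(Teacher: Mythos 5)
You should know at the outset that the paper does not prove Proposition \ref{value-adj-Paths-Cycles} at all: it is quoted from the reference \cite{Estrada-Moreno2014a} (listed as work in progress), so there is no internal argument to compare yours against, and your proposal has to be judged on its own merits. So judged, its concrete claims are all correct. The distinguishing sets are computed correctly: on $C_n$ ($n\ge 5$) an adjacent pair yields four consecutive vertices, a distance-two pair yields four vertices spread over five consecutive positions, and every other pair yields a set of size six; on $P_n$ the same holds away from the ends, where the sets shrink to size three. The passage to the complement $W=V\setminus S$, the window-averaging lower bounds on $C_n$ (density $1/2$ for $k=2$; the upgrade from ``gap $\ge 4$'' to ``gap $\ge 5$'' via the distance-two sets for $k=3$; $W=\emptyset$ for $k=4$ because the size-four sets cover $V$), and the alternating and period-five constructions all verify, as does the count $\lfloor(n-9)/5\rfloor+1=\lfloor(n-4)/5\rfloor$ for a maximum $W\subseteq\{v_5,\dots,v_{n-4}\}$ with gaps at least $5$, which is exactly what the $k=3$ path formula requires. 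The complement statements are indeed free, since $\mathcal{C}_G(x,y)=\mathcal{C}_{\overline{G}}(x,y)$ (Section \ref{sect-prelim}) and the relevant range is precisely $k\le\mathcal{C}(P_n)=3$, respectively $k\le\mathcal{C}(C_n)=4$.

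Two remarks on the part you deferred, one reassuring and one cautionary. For $k=3$ on $P_n$ the endpoint work is cleaner than you suggest: the size-three sets $\mathcal{C}_{P_n}(v_1,v_2)=\{v_1,v_2,v_3\}$ and $\mathcal{C}_{P_n}(v_1,v_3)=\{v_1,v_3,v_4\}$ force $W$ to avoid $v_1,\dots,v_4$ entirely (symmetrically at the other end), and together with the gap-$\ge 5$ condition this already gives $\lfloor(n-4)/5\rfloor$ with no residue-class analysis. For $k=2$ on $P_n$, however, your averaging language does not transfer, and this is a genuine (if repairable) hole in the outline: averaging is exact on the cycle because every vertex lies in exactly four windows, but on a path the four-window constraints alone admit local density above $1/2$ --- the pattern $1,1,0,0,1,1$ places four $W$-vertices in six consecutive positions while every window of four consecutive vertices contains only two --- so no pure density count over adjacent-pair windows can yield $\lfloor(n-1)/2\rfloor$. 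The correct route inside your framework is structural rather than averaging: an adjacent pair $p,p+1\in W$ together with the four-windows and the distance-two spans forces the seven surrounding positions $p-3,\dots,p-1,p+2,\dots,p+4$ out of $W$, so adjacent pairs in $W$ are wasteful; hence an extremal $W$ has gaps $\ge 2$, and the size-three end sets then shave off the final unit. You flagged the endpoint bookkeeping as the hard part, which is accurate, but be aware that it also requires replacing the averaging step by this kind of propagation argument, not merely adjusting constructions modulo $2$ and $5$.
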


In addition to all the notations and terminology already mentioned, we use the notation $K_n$, $C_n$, $N_n$ and $P_n$ for complete graphs, cycle graphs, empty graphs and path graphs of order $n$, respectively, and $K_{r,s}$ for complete bipartite graphs of order $r+s$. In this work, the remaining definitions will be given the first time that the concept appears in the text.

\section{Computing the value $k$ for which $G\circ\mathcal{H}$ is $k$-metric dimensional}\label{sectionDimensionalLexi}

In order to compute the $k$-metric dimension of a graph, it is necessary to know the interval of possible values for $k$, for which this can be computed. Since in this article we deal with the $k$-metric dimension of the lexicographic product of graphs, the first issue which we need to solve is precisely finding the value $k$ for which $G\circ\mathcal{H}$ is $k$-metric dimensional. We recall that this could be computed in polynomial time according to an algorithm given in \cite{Yero2013c}. Nevertheless we particularize that general studies for the purposes of this article.

The following concepts were already defined in \cite{Estrada-Moreno2013,Yero2013c}. Given two vertices $x,y\in V(G)$, the set of \textit{distinctive vertices} of $x,y$ is $${\cal D}_G(x,y)=\left\{z\in V(G): d_{G}(x,z)\ne d_{G}(y,z)\right\}$$
and, the set of \emph{non-trivial distinctive vertices} of  $x,y$ is $${\cal D}^*_G(x,y)={\cal D}_G(x,y)-\{x,y\}.$$ Finally, according to the local definitions above, it is defined the following global parameter,
$${\cal D}(G)=\min_{x,y\in V(G)}|{\cal D}_G(x,y)|.$$
The polynomial algorithm given in \cite{Yero2013c} for computing the value $k$ for which $G\circ\mathcal{H}$ is $k$-metric dimensional is based on the following result, which use the parameter above.

\begin{theorem} \label{theokmetric}  {\rm \cite{Estrada-Moreno2013}}
A connected graph  $G$ is $k$-metric dimensional  if and only if $k={\cal D}(G)$.
\end{theorem}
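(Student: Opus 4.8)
The plan is to reformulate both the definition of a $k$-metric generator and the quantity $\mathcal{D}(G)$ in terms of the distinctive vertex sets, and then observe that the full vertex set $V(G)$ is simultaneously optimal for every pair. Note first that, by the definition of $\mathcal{D}_G(x,y)$, a vertex $z$ distinguishes $x$ and $y$ precisely when $z\in\mathcal{D}_G(x,y)$. Hence a set $S\subseteq V(G)$ is a $k$-metric generator if and only if
$$|S\cap\mathcal{D}_G(u,v)|\ge k\quad\text{for every pair of distinct }u,v\in V(G).$$
With this translation, proving that $G$ is $k$-metric dimensional is the same as proving that the largest integer $k$ for which some $S$ satisfies the displayed inequality for all pairs equals $\mathcal{D}(G)=\min_{x,y}|\mathcal{D}_G(x,y)|$. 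I would split this into an existence direction and an impossibility direction.

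For the existence direction, I would simply take $S=V(G)$. Then for every pair $u,v$ we have $|V(G)\cap\mathcal{D}_G(u,v)|=|\mathcal{D}_G(u,v)|\ge\mathcal{D}(G)$, by the very definition of $\mathcal{D}(G)$ as the minimum over all pairs. Therefore $V(G)$ is a $\mathcal{D}(G)$-metric generator, so a $\mathcal{D}(G)$-metric basis exists and $G$ admits a $k$-metric generator for $k=\mathcal{D}(G)$.

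For the impossibility direction, I would show that no $(\mathcal{D}(G)+1)$-metric generator can exist, which forces $\mathcal{D}(G)$ to be the largest admissible value. Let $x_0,y_0$ be a pair attaining the minimum, so that $|\mathcal{D}_G(x_0,y_0)|=\mathcal{D}(G)$. For an arbitrary $S\subseteq V(G)$ the trivial bound $|S\cap\mathcal{D}_G(x_0,y_0)|\le|\mathcal{D}_G(x_0,y_0)|=\mathcal{D}(G)$ holds, so $S$ can never distinguish $x_0$ and $y_0$ by more than $\mathcal{D}(G)$ vertices. Consequently no set satisfies the $k$-metric generator condition for $k=\mathcal{D}(G)+1$. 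Combining the two directions, the largest $k$ for which a $k$-metric basis exists is exactly $\mathcal{D}(G)$, which is the definition of $G$ being $\mathcal{D}(G)$-metric dimensional, giving the claimed equivalence.

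The argument is essentially a two-line optimization observation, so there is no deep obstacle; the only point requiring care is the clean bookkeeping that the full vertex set realizes the per-pair maximum $|\mathcal{D}_G(u,v)|$ simultaneously for all pairs, while a single minimizing pair $x_0,y_0$ caps the achievable $k$ from above. Making the ``if and only if'' precise against the definition of \emph{$k$-metric dimensional} (namely that $k$ is the \emph{largest} such integer) is the one place where I would be explicit, so that the equality $k=\mathcal{D}(G)$ follows rather than merely the inequality $k\le\mathcal{D}(G)$.
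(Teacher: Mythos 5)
Your proof is correct and complete: the translation of the generator condition into $|S\cap\mathcal{D}_G(u,v)|\ge k$, the use of $S=V(G)$ to realize $k=\mathcal{D}(G)$, and the cap $|S\cap\mathcal{D}_G(x_0,y_0)|\le\mathcal{D}(G)$ at a minimizing pair together pin down $\mathcal{D}(G)$ as the largest admissible $k$. Note that the present paper does not prove Theorem \ref{theokmetric} at all (it is quoted from \cite{Estrada-Moreno2013}), and your argument is essentially the canonical proof given in that reference, so there is nothing further to compare.
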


According to the result above, given a connected graph $G$ of order $n$ and a family $\mathcal{H}$ of $n$ non-trivial graphs, our goal in this section is to find the value ${\cal D}(G\circ \mathcal{H})$ and express this in terms of some known parameters of $G$ and $\mathcal{H}$.

Now, analogously to the definitions above for the case of $k$-metric generators, in \cite{Estrada-Moreno2014a} were presented the following concepts, regarding the $k$-adjacency generators of graphs. Given two vertices $x,y\in V(G)$, the set of \textit{adjacency distinctive vertices} of $x,y$ is $${\cal C}_G(x,y)=(N_G(x)\triangledown N_G(y))\cup\{x,y\},$$
and, the set of \emph{non-trivial adjacency distinctive vertices} of  $x,y$ is $$\mathcal{C}_G^*(x,y)=\mathcal{C}_G(x,y)-\{x,y\}.$$ From the above, we define the following global parameter, $$\mathcal{C}(G)=\min_{x,y\in V(G)}\{|\mathcal{C}_G(x,y)|\}.$$
In clear analogy with Theorem \ref{theokmetric} the following formula, from \cite{Estrada-Moreno2014a}, gives the value $k$ for which a graph is $k$-adjacency dimensional.

\begin{theorem}\label{theokadjacency} {\rm \cite{Estrada-Moreno2014a}}
A graph $G$ is $k$-adjacency dimensional if and only if $k=\mathcal{C}(G)$.
\end{theorem}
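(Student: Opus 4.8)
The plan is to mirror the proof of Theorem \ref{theokmetric}, exploiting the fact that a $k$-adjacency generator is precisely a set $S$ with $|\mathcal{C}_G(x,y)\cap S|\ge k$ for every pair of distinct vertices $x,y\in V(G)$. Writing $k'$ for the largest integer such that a $k'$-adjacency basis exists, I would establish the two inequalities $k'\ge \mathcal{C}(G)$ and $k'\le \mathcal{C}(G)$ separately, and conclude by the definition of a $k'$-adjacency dimensional graph.

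For the lower bound, I would simply test the full vertex set $S=V(G)$. Since $\mathcal{C}_G(x,y)\subseteq V(G)$ for every pair, we have $|\mathcal{C}_G(x,y)\cap V(G)|=|\mathcal{C}_G(x,y)|\ge \mathcal{C}(G)$ for all distinct $x,y$, so $V(G)$ is a $\mathcal{C}(G)$-adjacency generator. Hence a $\mathcal{C}(G)$-adjacency basis exists, and therefore $k'\ge\mathcal{C}(G)$.

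For the upper bound, I would let $x^*,y^*$ be a pair attaining the minimum in the definition of $\mathcal{C}(G)$, that is, $|\mathcal{C}_G(x^*,y^*)|=\mathcal{C}(G)$. For an arbitrary $S\subseteq V(G)$ the inclusion $\mathcal{C}_G(x^*,y^*)\cap S\subseteq\mathcal{C}_G(x^*,y^*)$ gives $|\mathcal{C}_G(x^*,y^*)\cap S|\le |\mathcal{C}_G(x^*,y^*)|=\mathcal{C}(G)$, and hence $\min_{x,y}|\mathcal{C}_G(x,y)\cap S|\le\mathcal{C}(G)$. Thus no set $S$ can be a $k$-adjacency generator for any $k>\mathcal{C}(G)$, which yields $k'\le\mathcal{C}(G)$. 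Combining both bounds gives $k'=\mathcal{C}(G)$, so $G$ is $\mathcal{C}(G)$-adjacency dimensional, as required.

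I do not anticipate a genuine obstacle here: the argument is purely set-theoretic, and the only point demanding a moment of care is the upper bound, where one must evaluate at a \emph{fixed} minimizing pair $(x^*,y^*)$ rather than attempting to compare the two minima-over-pairs directly (whose minimizing pairs need not coincide). Once this is done, the single inclusion $\mathcal{C}_G(x,y)\cap S\subseteq\mathcal{C}_G(x,y)$ does all the work.
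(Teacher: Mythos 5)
Your proof is correct and complete: the full vertex set $V(G)$ witnesses that a $\mathcal{C}(G)$-adjacency generator (and hence a basis) exists, and evaluating any candidate $S$ at a fixed minimizing pair $(x^*,y^*)$ shows no $k$-adjacency generator can exist for $k>\mathcal{C}(G)$, which together pin down the largest admissible $k$ as $\mathcal{C}(G)$. There is, however, nothing in this paper to compare it against: Theorem \ref{theokadjacency} is stated without proof, being quoted from \cite{Estrada-Moreno2014a}, just as its metric counterpart (Theorem \ref{theokmetric}) is quoted from \cite{Estrada-Moreno2013}. Your argument is the natural adjacency analogue of the proof of that metric result, and it correctly handles the one delicate point, namely working with a fixed minimizing pair rather than comparing minima over different pairs.
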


Twin vertices plays a highly significant role into studying the $k$-metric dimension of graphs, as we will observe through our exposition. In this sense, we need to use some more formal terminology regarding themselves. Given a vertex $x\in V(G)$, we define the \emph{true twin equivalence class} to which $x$ belongs by $TT(x)$, and we define the \emph{false twin equivalence class} to which $x$ belongs by $FT(x)$. Also we denote by $S(G)$, $FT(G)$ and $TT(G)$ the union of singletons, the false, and the true twin equivalence classes of a graph $G$, respectively. Now, for any graph $G$ of order $n$, a family of $n$ graphs $\mathcal{H}=\{H_1,\ldots,H_n\}$ and $u_i\in V(G)$, we define in $G\circ\mathcal{H}$ the following local parameter:
$$
\mathcal{T}(u_i,\mathcal{H})=\left\lbrace
\begin{array}{ll}
|V(H_i)|,&\text{if } u_i\in S(G),\\
\\
\displaystyle\min_{u_j,u_l\in FT(u_i)}\{\delta(H_j)+\delta(H_l)+2\}, & \text{if } u_i\in FT(G),\\
\\
\displaystyle\min_{u_j,u_l\in TT(u_i)}\{|V(H_j)|-\Delta(H_j)+|V(H_l)|-\Delta(H_l)\}, & \text{if } u_i\in TT(G).
\end{array}
\right.
$$

Moreover, we define a global parameter from the local parameter defined above,
$$\mathcal{T}(G\circ\mathcal{H})=\min_{u_i\in V(G)}\{\mathcal{T}(u_i,\mathcal{H})\}.$$ We also define $$\mathcal{C}(\mathcal{H})=\min_{H_i\in\mathcal{H}}\{\mathcal{C}(H_i)\}.$$

With all the tools presented till this point, we are now prepare to give our first result regarding the value $k$ for which the lexicographic product graphs are $k$-metric dimensional.

\begin{theorem}\label{theoLexiDimensional}
Let $G$ be a connected graph of order $n\ge 2$ and let $\mathcal{H}$ be a family of $n$ non-trivial graphs. The graph $G\circ\mathcal{H}$ is $k$-metric dimensional if and only if $k=\min\{\mathcal{T}(G\circ\mathcal{H}),\mathcal{C}(\mathcal{H})\}$.
\end{theorem}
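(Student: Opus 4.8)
The plan is to invoke Theorem~\ref{theokmetric}, which reduces the problem to computing $\mathcal{D}(G\circ\mathcal{H})=\min_{X,Y}|\mathcal{D}_{G\circ\mathcal{H}}(X,Y)|$ over all pairs of distinct vertices $X=(u_i,v_l^i)$ and $Y=(u_j,v_m^j)$. The goal is therefore to show that this minimum equals $\min\{\mathcal{T}(G\circ\mathcal{H}),\mathcal{C}(\mathcal{H})\}$. The natural split is according to whether the two vertices lie in the same copy ($i=j$) or in different copies ($i\ne j$), and I expect the $\mathcal{C}(\mathcal{H})$ term to arise from the same-copy case and the $\mathcal{T}(G\circ\mathcal{H})$ term to arise (largely) from the twin-vertex analysis in $G$.

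First I would handle the case $i=j$. Using Claim~\ref{claimLexi}(ii), vertices inside the $i^{\text{th}}$ copy are distinguished by another vertex $(u_i,v_r^i)$ of the same copy exactly when $d_{H_i,2}(v_l^i,v_r^i)\ne d_{H_i,2}(v_m^i,v_r^i)$, which is precisely the adjacency-distinguishing condition in $H_i$; meanwhile, by Claim~\ref{claimLexi}(i), every vertex $(u_p,\cdot)$ with $p\ne i$ is at distance $d_G(u_i,u_p)$ from \emph{both} $X$ and $Y$, hence distinguishes neither. Consequently, for a same-copy pair, $|\mathcal{D}_{G\circ\mathcal{H}}(X,Y)|=|\mathcal{C}_{H_i}(v_l^i,v_m^i)|$, and minimizing over all such pairs and over all $i$ gives exactly $\mathcal{C}(\mathcal{H})$.

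The harder case is $i\ne j$, and this is where the parameter $\mathcal{T}(G\circ\mathcal{H})$ must be extracted. Here $X$ and $Y$ lie in different copies, so by Claim~\ref{claimLexi}(i) a vertex $(u_p,\cdot)$ with $p\notin\{i,j\}$ distinguishes them iff $d_G(u_i,u_p)\ne d_G(u_j,u_p)$, i.e.\ iff $u_p\in\mathcal{D}_G(u_i,u_j)$; within the copies $H_i$ and $H_j$ one must additionally count the vertices that distinguish $X,Y$ via the mixed use of Claim~\ref{claimLexi}(i) and~(ii). When $u_i,u_j$ are \emph{not} twins in $G$, the set $\mathcal{D}_G(u_i,u_j)$ already contains a third vertex of $G$, whose entire copy distinguishes $X$ from $Y$, so $|\mathcal{D}_{G\circ\mathcal{H}}(X,Y)|$ is large and bounded below by a full copy. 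The binding constraints come from pairs $u_i,u_j$ that \emph{are} twins: if they are false twins, a vertex of $H_i$ distinguishes $X=(u_i,v_l^i)$ from $Y=(u_j,v_m^j)$ roughly when it is a non-neighbor inside the copy, contributing $\delta(H_i)+1$ and $\delta(H_j)+1$ type counts, yielding $\delta(H_i)+\delta(H_j)+2$; if they are true twins, the analogous count based on non-adjacency within the copies produces $|V(H_i)|-\Delta(H_i)+|V(H_j)|-\Delta(H_j)$. Matching these against the minimizing choices $u_j,u_l\in FT(u_i)$ and $u_j,u_l\in TT(u_i)$ in the definition of $\mathcal{T}(u_i,\mathcal{H})$, together with the singleton case $|V(H_i)|$, recovers $\mathcal{T}(G\circ\mathcal{H})$.

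The main obstacle will be the careful bookkeeping in the $i\ne j$ twin case: one must verify precisely which vertices inside $H_i$ and $H_j$ distinguish $X$ from $Y$ (distinguishing depends on adjacency to the \emph{second} coordinate, mediated by whether $u_i\sim u_j$, which differs for false versus true twins), and confirm that the minimum over all admissible pairs of twins realizes exactly the degree/order expressions in $\mathcal{T}(u_i,\mathcal{H})$ and is not undercut by some non-twin pair. Once both cases are settled, taking the overall minimum yields $\mathcal{D}(G\circ\mathcal{H})=\min\{\mathcal{T}(G\circ\mathcal{H}),\mathcal{C}(\mathcal{H})\}$, and Theorem~\ref{theokmetric} completes the proof.
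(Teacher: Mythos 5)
Your proposal is correct and follows essentially the same route as the paper's proof: the reduction via Theorem~\ref{theokmetric} to computing $\mathcal{D}(G\circ\mathcal{H})$, the same-copy case yielding $\mathcal{C}(\mathcal{H})$, the non-twin cross-copy pairs being dominated by a full copy $V(H_l)$ with $u_l\in\mathcal{D}_G^*(u_i,u_j)$, and the true/false twin pairs producing exactly the two expressions in $\mathcal{T}(G\circ\mathcal{H})$. One wording slip worth fixing: for false twins (where $d_G(u_i,u_j)=2$) the distinguishing vertices inside the copies are the closed neighborhoods $N_{H_i}[v_l^i]\cup N_{H_j}[v_m^j]$ --- neighbors, not non-neighbors (non-neighbors arise in the true-twin case) --- which is what is consistent with the count $\delta(H_i)+\delta(H_j)+2$ that you correctly state.
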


\begin{proof}
By Theorem \ref{theokmetric}, it is only necessary to prove that $\mathcal{D}(G\circ\mathcal{H})=\min\{\mathcal{T}(G\circ\mathcal{H}),\mathcal{C}(\mathcal{H})\}$. Hence, let $(u_i,v_x^i),(u_j,v_y^j)\in V(G\circ\mathcal{H})$ be two different vertices. We analyze two cases.\\
\\Case 1. $i=j$. By Claim \ref{claimLexi} (i) and (ii), it follows that $\mathcal{D}_{G\circ\mathcal{H}}((u_i,v_x^i),(u_i,v_y^i))=\{u_i\}\times\mathcal{C}_{H_i}(v_x^i,v_y^i)$. Thus, $$R_1=\min_{(u_i,v_x^i),(u_i,v_y^i)\in V(G\circ\mathcal{H})}\{|\mathcal{D}_{G\circ\mathcal{H}}((u_i,v_x^i),(u_i,v_y^i))|\}=\min_{H_i\in\mathcal{H}}\{\mathcal{C}(H_i)\}=\mathcal{C}(\mathcal{H}).$$
Case 2. $i\ne j$. If $u_i,u_j$ are not twins, then $\mathcal{D}_G^*(u_i,u_j)\ne\emptyset$. So, for every $u_l\in\mathcal{D}_G^*(u_i,u_j)$ it follows $V(H_l)\subsetneq \mathcal{D}_{G\circ\mathcal{H}}((u_i,v_x^i),(u_j,v_y^j))$ or equivalently $|V(H_l)|< |\mathcal{D}_{G\circ\mathcal{H}}((u_i,v_x^i),(u_j,v_y^j))|$. Thus,
$$R_2=\min_{(u_i,v_x^i),(u_j,v_y^j)\in V(G\circ\mathcal{H})}\{|\mathcal{D}_{G\circ\mathcal{H}}((u_i,v_x^i),(u_j,v_y^j)))|\}>\min_{H_l\in\mathcal{H}}\{|V(H_l)|\}\ge \min_{H_l\in\mathcal{H}}\{|\mathcal{C}(H_l)|\}=\mathcal{C}(\mathcal{H}).$$
Notice that $R_2$ is strictly greater than $R_1$. So, the minimum between them is $R_1$.

Now, we assume that $u_i,u_j$ are twins, so $\mathcal{D}^*(u_i,u_j)=\emptyset$. Hence we consider two possibilities for $u_i,u_j$ in the next statements, where the conclusions are consequences of Claim \ref{claimLexi} (i) and (ii).\\
\\Subcase 2.1: If $u_i\sim u_j$, then
$|\mathcal{D}_{G\circ\mathcal{H}}((u_i,v_x^i),(u_j,v_y^j))|=|(V(H_i)-N_{H_i}(v_x^i))\cup(V(H_j)-N_{H_j}(v_y^j))|$.
So, it follows that
\begin{align*}
R_3&=\min_{(u_i,v_x^i),(u_j,v_y^j)\in V(G\circ\mathcal{H})}\{|\mathcal{D}_{G\circ\mathcal{H}}((u_i,v_x^i),(u_j,v_y^j))|\}\\
&= \min\{|(V(H_i)-N_{H_i}(v_x^i))\cup(V(H_j)-N_{H_j}(v_y^j))|\}\\
&= \min\{|V(H_i)|-\Delta(H_i)+|V(H_j)|-\Delta(H_j)\}\\
&= \min_{u_l\in V(G)}\{\mathcal{T}(u_l,\mathcal{H})\}\\
&=\mathcal{T}(G\circ\mathcal{H}).
\end{align*}
Subcase 2.2: If $d_G(u_i,u_j)=2$, then
$|\mathcal{D}_{G\circ\mathcal{H}}((u_i,v_x^i),(u_j,v_y^j))|=|N_{H_i}[v_x^i]\cup N_{H_j}[v_y^j]|$.
Similarly, we obtain that
\begin{align*}
R_4&=\min_{(u_i,v_x^i),(u_j,v_y^j)\in V(G\circ\mathcal{H})}\{|\mathcal{D}_{G\circ\mathcal{H}}((u_i,v_x^i),(u_j,v_y^j))|\} \\
&= \min\{|N_{H_i}[v_x^i]\cup N_{H_j}[v_y^j]|\}\\
&= \min\{\delta(H_i)+\delta(H_j)+2\}\\
&= \min_{u_l\in V(G)}\{\mathcal{T}(u_l,\mathcal{H})\}\\
&=\mathcal{T}(G\circ\mathcal{H}).
\end{align*}
As a conclusion of all the statements above, it is obtained that
\begin{align*}
\mathcal{D}(G\circ\mathcal{H})&=\min_{(u_i,v_x^i),(u_j,v_y^j)\in V(G\circ\mathcal{H})}\{|\mathcal{D}_{G\circ\mathcal{H}}((u_i,v_x^i),(u_j,v_y^j))|\}\\
&=\min\left\{\min_{i=j}\{|\mathcal{D}_{G\circ\mathcal{H}}((u_i,v_x^i),(u_j,v_y^j))|\},\min_{j\ne i}\{|\mathcal{D}_{G\circ\mathcal{H}}((u_i,v_x^i),(u_j,v_y^j))|\}\right\}\\
&= \min\{R_1,R_2,R_3,R_4\}\\
&= \min\{R_1,R_3,R_4\}\\
&= \min\{\mathcal{C}(\mathcal{H}),\mathcal{T}(G\circ\mathcal{H})\}.
\end{align*}
Therefore the proof is completed.
\end{proof}

Next we emphasize some particular cases of Theorem \ref{theoLexiDimensional} when the lexicographic product graphs have some specific structure which are related with the existence or not of twin vertices in the graph $G$.

\begin{corollary}
Let $G$ be a connected twins free graph of order $n\ge 2$ and let $\mathcal{H}$ be a family of $n$ non-trivial graphs. Then $G\circ\mathcal{H}$ is $\mathcal{C}(\mathcal{H})$-metric dimensional.
\end{corollary}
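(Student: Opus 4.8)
The plan is to specialize Theorem~\ref{theoLexiDimensional} to the case where $G$ is twins free and show that the quantity $\min\{\mathcal{T}(G\circ\mathcal{H}),\mathcal{C}(\mathcal{H})\}$ collapses to $\mathcal{C}(\mathcal{H})$. By Theorem~\ref{theoLexiDimensional}, $G\circ\mathcal{H}$ is $k$-metric dimensional with $k=\min\{\mathcal{T}(G\circ\mathcal{H}),\mathcal{C}(\mathcal{H})\}$, so the entire task reduces to proving the identity $\min\{\mathcal{T}(G\circ\mathcal{H}),\mathcal{C}(\mathcal{H})\}=\mathcal{C}(\mathcal{H})$ under the hypothesis that $G$ is twins free.

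First I would observe what it means for $G$ to be twins free: every twin equivalence class is a singleton, so $S(G)=V(G)$ while $FT(G)=TT(G)=\emptyset$. Consequently, for every $u_i\in V(G)$, only the first branch of the definition of $\mathcal{T}(u_i,\mathcal{H})$ applies, giving $\mathcal{T}(u_i,\mathcal{H})=|V(H_i)|$. Taking the minimum over all vertices yields $\mathcal{T}(G\circ\mathcal{H})=\min_{H_i\in\mathcal{H}}\{|V(H_i)|\}$.

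Next I would compare this with $\mathcal{C}(\mathcal{H})=\min_{H_i\in\mathcal{H}}\{\mathcal{C}(H_i)\}$. The key inequality is $\mathcal{C}(H_i)\le|V(H_i)|$ for each non-trivial graph $H_i$, which is immediate because $\mathcal{C}_{H_i}(x,y)=(N_{H_i}(x)\triangledown N_{H_i}(y))\cup\{x,y\}$ is by construction a subset of $V(H_i)$, so its minimum cardinality $\mathcal{C}(H_i)$ cannot exceed $|V(H_i)|$. Passing to the minimum over the family preserves the inequality, giving $\mathcal{C}(\mathcal{H})\le\mathcal{T}(G\circ\mathcal{H})$. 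Therefore $\min\{\mathcal{T}(G\circ\mathcal{H}),\mathcal{C}(\mathcal{H})\}=\mathcal{C}(\mathcal{H})$, and the conclusion follows.

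This corollary is genuinely routine once Theorem~\ref{theoLexiDimensional} is in hand; there is no real obstacle, only careful bookkeeping of which branch of $\mathcal{T}(u_i,\mathcal{H})$ is active and the elementary bound $\mathcal{C}(H_i)\le|V(H_i)|$. The only point requiring a moment's care is verifying that ``twins free'' forces \emph{both} $FT(G)$ and $TT(G)$ to be empty (not merely one of them), which is exactly the content of the definition: if all twin equivalence classes are singletons, then neither false nor true twin classes of size at least two can exist, so the first case of the piecewise definition is the only one that ever contributes.
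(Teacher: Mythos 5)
Your proof is correct and follows exactly the route the paper intends: the corollary is stated as an immediate specialization of Theorem~\ref{theoLexiDimensional}, and your verification that twins freeness forces $\mathcal{T}(G\circ\mathcal{H})=\min_{H_i\in\mathcal{H}}\{|V(H_i)|\}\ge\mathcal{C}(\mathcal{H})$ is precisely the bookkeeping the authors leave implicit (it also mirrors Case 2 of their proof of the theorem, where non-twin pairs give a value strictly above $\mathcal{C}(\mathcal{H})$). No gaps.
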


\begin{corollary}\label{HCondition}
Let $G$ be a connected non-trivial graph and let $H$ be a graph of order $n'\ge 2$.
\begin{enumerate}[{\rm (i)}]
\item If $G$ is twins free, then the graph $G\circ H$ is $k$-metric dimensional if and only if $k=\mathcal{C}(H)$.
\item If $G$ contains at least one false twin and one true twin, then the graph $G\circ H$ is $k$-metric dimensional if and only if $k=\min\{2\delta(H)+2,2(n'-\Delta(H)),\mathcal{C}(H)\}$.
\item If $G$ is true twins free and contains at least one false twin, then the graph $G\circ H$ is $k$-metric dimensional if and only if $k=\min\{2\delta(H)+2,\mathcal{C}(H)\}$.
\item If $G$ is false twins free and contains at least one true twin, then the graph $G\circ H$ is $k$-metric dimensional if and only if $k=\min\{2(n'-\Delta(H)),\mathcal{C}(H)\}$.
\end{enumerate}
\end{corollary}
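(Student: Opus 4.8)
The plan is to reduce everything to Theorem~\ref{theoLexiDimensional} applied to the special family $\mathcal{H}=\{H,H,\ldots,H\}$. First I would record the two simplifications that come from all copies being isomorphic to $H$. On one hand, $\mathcal{C}(\mathcal{H})=\min_{H_i\in\mathcal{H}}\{\mathcal{C}(H_i)\}=\mathcal{C}(H)$. On the other hand, evaluating the local parameter $\mathcal{T}(u_i,\mathcal{H})$ on each of the three kinds of twin equivalence class and substituting $|V(H_i)|=n'$, $\delta(H_i)=\delta(H)$ and $\Delta(H_i)=\Delta(H)$ gives
\begin{equation*}
\mathcal{T}(u_i,\mathcal{H})=\begin{cases} n', & u_i\in S(G),\\ 2\delta(H)+2, & u_i\in FT(G),\\ 2(n'-\Delta(H)), & u_i\in TT(G).\end{cases}
\end{equation*}
Hence $\mathcal{T}(G\circ H)$ is the minimum, over the twin types actually present in $G$, of the corresponding values above.

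The key observation, and the step that glues the four parts together, is the inequality $\mathcal{C}(H)\le n'$. This holds because $\mathcal{C}(H)=\min_{x,y}|(N_H(x)\triangledown N_H(y))\cup\{x,y\}|$ is the cardinality of a subset of $V(H)$, so it can never exceed $|V(H)|=n'$. Consequently, whenever the singleton value $n'$ appears as a term in $\mathcal{T}(G\circ H)$, it is absorbed in the outer minimum against $\mathcal{C}(H)$, since $\min\{n',\mathcal{C}(H)\}=\mathcal{C}(H)$. In other words, the presence or absence of singleton classes in $G$ is invisible to $\min\{\mathcal{T}(G\circ H),\mathcal{C}(H)\}$, which is exactly why no $n'$ term survives in any of the four closed formulas.

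With this in hand each part is a short case analysis. For (i), $G$ twins free means every class is a singleton, so $\mathcal{T}(G\circ H)=n'$ and $\min\{n',\mathcal{C}(H)\}=\mathcal{C}(H)$. For (ii), both $FT(G)$ and $TT(G)$ are nonempty, so $\mathcal{T}(G\circ H)$ equals $\min\{2\delta(H)+2,2(n'-\Delta(H))\}$ if $G$ has no singletons and $\min\{n',2\delta(H)+2,2(n'-\Delta(H))\}$ otherwise; in either situation, taking the outer minimum with $\mathcal{C}(H)$ and discarding the now-redundant $n'$ yields $\min\{2\delta(H)+2,2(n'-\Delta(H)),\mathcal{C}(H)\}$. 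Parts (iii) and (iv) are identical except that exactly one of the twin types is present, which simply removes the corresponding term from the minimum. In each case I would then invoke Theorem~\ref{theoLexiDimensional} to turn the computed value of $\min\{\mathcal{T}(G\circ H),\mathcal{C}(H)\}$ into the claimed characterization of the integer $k$ for which $G\circ H$ is $k$-metric dimensional.

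The only genuinely non-routine point is the inequality $\mathcal{C}(H)\le n'$; without it the singleton value $n'$ would clutter every formula and the statements would not take their clean form. Everything else is bookkeeping: substituting equal copies into the definition of $\mathcal{T}$ and tracking which twin types occur under each hypothesis.
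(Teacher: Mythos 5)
Your proof is correct and follows exactly the route the paper intends: the corollary is presented as an immediate consequence of Theorem~\ref{theoLexiDimensional}, obtained by specializing to $\mathcal{H}=\{H,\ldots,H\}$, evaluating $\mathcal{T}(u_i,\mathcal{H})$ on each twin-class type, and letting $\mathcal{C}(H)$ absorb the singleton value $n'$. The one step the paper leaves unstated, namely $\mathcal{C}(H)\le n'$ because $\mathcal{C}(H)$ is the minimum cardinality of a subset of $V(H)$, is precisely the observation you supply, so your argument matches the intended derivation with nothing missing.
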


As some instances of graphs $G$ that satisfy the conditions of the corollary above we next construct some examples. In Figure \ref{figTwinFigureH}, the vertices $v_{11}$ and $v_{12}$ of graph $G_a$ are true twins, as well as $v_{21}$ and $v_{22}$ are false twins. So, $G_a$ contains two false twins and two true twins and satisfies the premise of Corollary \ref{HCondition} (ii), and as a consequence, for any graph $H$ of order $n'\ge 2$, we have that $G_a\circ H$ is $k$-metric dimensional for $k=\min\{2\delta(H)+2,2(n'-\Delta(H)),\mathcal{C}(H)\}$. Similarly, $G_b$ is a true twins free graph and it has two false twin vertices, $v_{11}$ and $v_3$. Thus, $G_b\circ H$ is $k$-metric dimensional for $k=\min\{2\delta(H)+2,\mathcal{C}(H)\}$. Finally, the graph $G_c$ is false twins free and it has two true twin vertices, $v_{21}$ and $v_{22}$, and consequently, $G_c\circ H$ is $k$-metric dimensional for $k=\min\{2(n'-\Delta(H)),\mathcal{C}(H)\}$.

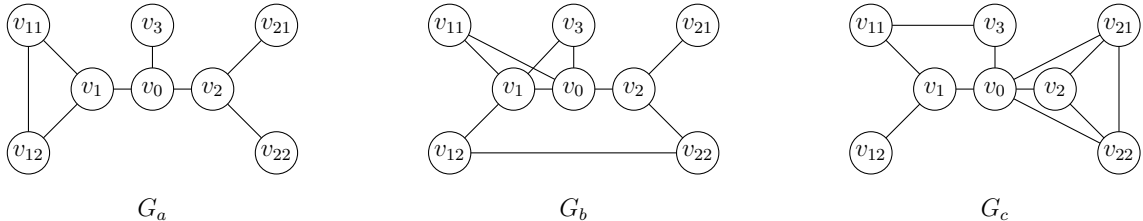
\begin{figure}[!ht]
\centering
\begin{tikzpicture}[scale=.8, transform shape, inner sep = 1pt, outer sep = 0pt, minimum size = 20pt]
\node [draw, shape=circle, fill=white] (v0) at (0,0) {$v_0$};
\node [draw, shape=circle, fill=white] (v1) at (-1,0) {$v_1$};
\node [draw, shape=circle, fill=white] (v2) at (1,0) {$v_2$};
\pgfmathparse{3*sqrt(2)/4};
\node [draw, shape=circle, fill=white] (v3) at (0,\pgfmathresult) {$v_3$};
\node [draw, shape=circle, fill=white, xshift=-1cm] (v11) at (135:1.5 cm) {$v_{11}$};
\node [draw, shape=circle, fill=white, xshift=-1cm] (v12) at (225:1.5 cm) {$v_{12}$};
\node [draw, shape=circle, fill=white, xshift=1cm] (v21) at (45:1.5 cm) {$v_{21}$};
\node [draw, shape=circle, fill=white, xshift=1cm] (v22) at (315:1.5 cm) {$v_{22}$};
\foreach \ind in {1,2,3}
\draw[black] (v0) -- (v\ind);
\draw[black] (v11) -- (v12);
\foreach \ind in {11,12}
\draw[black] (v1) -- (v\ind);
\foreach \ind in {21,22}
\draw[black] (v2) -- (v\ind);
\pgfmathparse{3*sqrt(2)/4};
\node at (0,-2) {$G_a$};
\node [draw, shape=circle, fill=white] (v0') at (7,0) {$v_0$};
\node [draw, shape=circle, fill=white] (v1') at (6,0) {$v_1$};
\node [draw, shape=circle, fill=white] (v2') at (8,0) {$v_2$};
\pgfmathparse{3*sqrt(2)/4};
\node [draw, shape=circle, fill=white] (v3') at (7,\pgfmathresult) {$v_3$};
\node [draw, shape=circle, fill=white, xshift=6cm] (v11') at (135:1.5 cm) {$v_{11}$};
\node [draw, shape=circle, fill=white, xshift=6cm] (v12') at (225:1.5 cm) {$v_{12}$};
\node [draw, shape=circle, fill=white, xshift=8cm] (v21') at (45:1.5 cm) {$v_{21}$};
\node [draw, shape=circle, fill=white, xshift=8cm] (v22') at (315:1.5 cm) {$v_{22}$};
\foreach \ind in {1',2',3',11'}
\draw[black] (v0') -- (v\ind);
\draw[black] (v12') -- (v22');
\foreach \ind in {11',12',3'}
\draw[black] (v1') -- (v\ind);
\foreach \ind in {21',22'}
\draw[black] (v2') -- (v\ind);
\pgfmathparse{3*sqrt(2)/4};
\node at (7,-2) {$G_b$};
\node [draw, shape=circle, fill=white] (v0'') at (14,0) {$v_0$};
\node [draw, shape=circle, fill=white] (v1'') at (13,0) {$v_1$};
\node [draw, shape=circle, fill=white] (v2'') at (15,0) {$v_2$};
\pgfmathparse{3*sqrt(2)/4};
\node [draw, shape=circle, fill=white] (v3'') at (14,\pgfmathresult) {$v_3$};
\node [draw, shape=circle, fill=white, xshift=13cm] (v11'') at (135:1.5 cm) {$v_{11}$};
\node [draw, shape=circle, fill=white, xshift=13cm] (v12'') at (225:1.5 cm) {$v_{12}$};
\node [draw, shape=circle, fill=white, xshift=15cm] (v21'') at (45:1.5 cm) {$v_{21}$};
\node [draw, shape=circle, fill=white, xshift=15cm] (v22'') at (315:1.5 cm) {$v_{22}$};
\foreach \ind in {1'',2'',3'',21'',22''}
\draw[black] (v0'') -- (v\ind);
\draw[black] (v21'') -- (v22'');
\draw[black] (v11'') -- (v3'');
\foreach \ind in {11'',12''}
\draw[black] (v1'') -- (v\ind);
\foreach \ind in {21'',22''}
\draw[black] (v2'') -- (v\ind);
\pgfmathparse{3*sqrt(2)/4};
\node at (14,-2) {$G_c$};
\end{tikzpicture}
\caption{The graph $G_x$, $x\in\{a,b,c\}$, satisfies the conditions of Corollary \ref{HCondition} (ii), (iii) and (iv) respectively.}
\label{figTwinFigureH}
\end{figure}

We also point out the particular case $k=2$ in Theorem \ref{theoLexiDimensional}.

\begin{corollary}
Let $G$ be a connected graph of order $n\ge 2$ and let $\mathcal{H}$ be a family of $n$ non-trivial graphs. The graph $G\circ\mathcal{H}$ is $2$-metric dimensional if and only if at least one of the following statements holds,
\begin{enumerate}[{\rm (i)}]
\item there exists $H_i\in\mathcal{H}$ which has twins or,
\item there exist two true twin vertices $u_i,u_j\in V(G)$ such that $\Delta(H_i)=n_i-1$ and $\Delta(H_j)=n_j-1$.
\item there exist two false twin vertices $u_i,u_j\in V(G)$ such that $H_i$ and $H_j$ contain at least an isolated vertex.
\end{enumerate}
\end{corollary}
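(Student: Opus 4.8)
The plan is to invoke Theorem~\ref{theoLexiDimensional}, which reduces the claim to deciding when $\min\{\mathcal{T}(G\circ\mathcal{H}),\mathcal{C}(\mathcal{H})\}=2$, and then to translate each of the two quantities being equal to $2$ into a combinatorial condition on the factors. First I would establish the two a priori lower bounds $\mathcal{T}(G\circ\mathcal{H})\ge 2$ and $\mathcal{C}(\mathcal{H})\ge 2$, valid under the standing hypotheses that every $H_i$ is non-trivial. For $\mathcal{C}(\mathcal{H})$ this follows because $\{x,y\}\subseteq\mathcal{C}_{H_i}(x,y)$ forces $|\mathcal{C}_{H_i}(x,y)|\ge 2$ for all distinct $x,y$. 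For $\mathcal{T}$ I would inspect the three branches of its definition: the singleton branch gives $|V(H_i)|\ge 2$, the false-twin branch gives $\delta(H_j)+\delta(H_l)+2\ge 2$, and the true-twin branch gives $|V(H_j)|-\Delta(H_j)+|V(H_l)|-\Delta(H_l)\ge 1+1=2$, using $\Delta(H)\le |V(H)|-1$. Consequently $G\circ\mathcal{H}$ is always at least $2$-metric dimensional, and $\min\{\mathcal{T}(G\circ\mathcal{H}),\mathcal{C}(\mathcal{H})\}=2$ holds precisely when $\mathcal{C}(\mathcal{H})=2$ or $\mathcal{T}(G\circ\mathcal{H})=2$.

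Next I would characterize each equality separately. For $\mathcal{C}(\mathcal{H})=2$, since $\mathcal{C}(\mathcal{H})=\min_{H_i\in\mathcal{H}}\mathcal{C}(H_i)$ and each $\mathcal{C}(H_i)\ge 2$, this is equivalent to $\mathcal{C}(H_i)=2$ for some $i$, i.e.\ to the existence of distinct $x,y\in V(H_i)$ with $|\mathcal{C}_{H_i}(x,y)|=2$, equivalently $N_{H_i}(x)\triangledown N_{H_i}(y)\subseteq\{x,y\}$. A short argument shows this last condition is exactly $N_{H_i}(x)\setminus\{y\}=N_{H_i}(y)\setminus\{x\}$, i.e.\ that $x,y$ are twins in $H_i$; this yields statement~(i). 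For $\mathcal{T}(G\circ\mathcal{H})=2$, since every term $\mathcal{T}(u_i,\mathcal{H})$ is $\ge 2$, the equality means $\mathcal{T}(u_i,\mathcal{H})=2$ for some $u_i$, and I would split according to which twin class $u_i$ lies in. The true-twin branch equals $2$ iff both summands equal $1$, i.e.\ $\Delta(H_j)=|V(H_j)|-1$ and $\Delta(H_l)=|V(H_l)|-1$ for two distinct true twins $u_j,u_l$, giving~(ii); the false-twin branch equals $2$ iff $\delta(H_j)=\delta(H_l)=0$, i.e.\ $H_j$ and $H_l$ each contain an isolated vertex, giving~(iii).

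The one point that needs care, and which I expect to be the main subtlety, is the singleton branch: $\mathcal{T}(u_i,\mathcal{H})=2$ with $u_i\in S(G)$ forces $|V(H_i)|=2$, which looks like a fourth case absent from the statement. I would resolve this by observing that a graph on two vertices is either $K_2$ or $N_2$, and in both its two vertices are twins (true twins in $K_2$, false twins in $N_2$); hence $|V(H_i)|=2$ already implies $\mathcal{C}(H_i)=2$ and thus statement~(i). Therefore the singleton branch never produces a genuinely new case, and the disjunction of (i), (ii) and (iii) exactly captures $\mathcal{T}(G\circ\mathcal{H})=2$ or $\mathcal{C}(\mathcal{H})=2$. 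Finally I would verify the converse directions, namely that each of (i)--(iii) forces $\min\{\mathcal{T}(G\circ\mathcal{H}),\mathcal{C}(\mathcal{H})\}$ down to exactly $2$; these are immediate from the same branch computations together with the universal lower bound $\ge 2$ established in the first step, so that $k=2$ by Theorem~\ref{theoLexiDimensional}.
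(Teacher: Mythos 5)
Your proposal is correct and follows exactly the route the paper intends: the corollary is stated as an immediate consequence of Theorem~\ref{theoLexiDimensional}, obtained by analyzing when $\min\{\mathcal{T}(G\circ\mathcal{H}),\mathcal{C}(\mathcal{H})\}=2$ branch by branch, which is what you do. Your explicit treatment of the singleton branch (a graph of order $2$ necessarily has twins, so that case collapses into statement~(i)) is a detail the paper leaves implicit, and you resolve it correctly.
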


It was shown in \cite{Yero2013c} a general algorithm that allow us to compute the value of $k$ for which a graph is $k$-metric dimensional. For the particular case of a graph $G\circ\mathcal{H}$, this algorithm can compute the value of $k$ in $O\left((n+\sum_{i=1}^n n_i)^3\right)$. A natural question which raises now regards with the existence of other algorithm that could allow us to compute the value of $k$ for which $G\circ\mathcal{H}$ is $k$-metric dimensional in a lower order. The next result solve precisely that fact, where the general complexity is slightly improved.

\begin{proposition}\label{prop-compute-CG}
Let $G$ be a connected graph of order $n\geq 2$ and let $\mathcal{H}$ be a family of $n$ non-trivial graphs. Then $\min\{\mathcal{T}(G\circ\mathcal{H}),\mathcal{C}(\mathcal{H})\}$ can be computed in $\displaystyle O\left(\max\left\{n^3+\sum_{u_i\in TT(G)\cup FT(G)} n_i^2,\sum_{i=1}^n n_i^3\right\}\right)$.
\end{proposition}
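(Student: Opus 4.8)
The plan is to apply Theorem~\ref{theoLexiDimensional}, which reduces the task to computing the two quantities $\mathcal{T}(G\circ\mathcal{H})$ and $\mathcal{C}(\mathcal{H})$ and returning their minimum. I would split the computation into three phases, bound each separately, and note that the final comparison costs $O(1)$. The first phase is the twin structure of $G$: from the adjacency matrix of $G$ I would build the twin equivalence relation $\mathcal{R}$ and classify every vertex as belonging to $S(G)$, $FT(G)$, or $TT(G)$. Testing $N_G(u_i)\setminus\{u_j\}=N_G(u_j)\setminus\{u_i\}$ for a fixed pair costs $O(n)$, and whether a twin pair is true or false is decided by the single bit recording $u_i\sim u_j$; ranging over all $O(n^2)$ pairs and assembling the classes costs $O(n^3)$. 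No shortest-path information in $G$ is required here, since both $\mathcal{T}$ and $\mathcal{C}$ depend only on adjacencies.

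For the second phase, computing $\mathcal{T}(G\circ\mathcal{H})$, I would proceed class by class according to the definition of $\mathcal{T}(u_i,\mathcal{H})$. For a singleton class the contribution is $\mathcal{T}(u_i,\mathcal{H})=n_i$, read off in $O(1)$. For the twin classes I need vertex degrees of the relevant copies: $\delta(H_i)$ when $u_i\in FT(G)$ and $\Delta(H_i)$ (together with $n_i$) when $u_i\in TT(G)$, each computed from the adjacency matrix of $H_i$ in $O(n_i^2)$. Since these degree computations are carried out only for $u_i\in TT(G)\cup FT(G)$, their total cost is $O\!\bigl(\sum_{u_i\in TT(G)\cup FT(G)}n_i^2\bigr)$. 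Within each false twin class I then take the two smallest values $\delta(H_j)$ over its members and add $2$; within each true twin class I take the two smallest values $n_j-\Delta(H_j)$ and add them. These selections are linear in the class sizes, hence dominated by the degree computations, and minimizing over all classes returns $\mathcal{T}(G\circ\mathcal{H})$.

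The third phase computes $\mathcal{C}(\mathcal{H})$. For each $H_i$ I would compute $\mathcal{C}(H_i)=\min_{x,y}\,|(N_{H_i}(x)\triangledown N_{H_i}(y))\cup\{x,y\}|$ by forming the symmetric difference for each of the $O(n_i^2)$ pairs at $O(n_i)$ per pair, that is $O(n_i^3)$ per copy and $O\!\bigl(\sum_{i=1}^n n_i^3\bigr)$ overall; a final scan then gives $\mathcal{C}(\mathcal{H})=\min_{H_i\in\mathcal{H}}\mathcal{C}(H_i)$ in $O(n)$ extra time.

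Adding the three phases gives $O\!\bigl(n^3+\sum_{u_i\in TT(G)\cup FT(G)}n_i^2+\sum_{i=1}^n n_i^3\bigr)$, and since $a+b=\Theta(\max\{a,b\})$ for nonnegative $a,b$ this is exactly $O\!\bigl(\max\{n^3+\sum_{u_i\in TT(G)\cup FT(G)}n_i^2,\ \sum_{i=1}^n n_i^3\}\bigr)$, as claimed. The only delicate point I anticipate is the bookkeeping that confines the $O(n_i^2)$ degree computations to the twin classes, so that the quadratic sum ranges over $TT(G)\cup FT(G)$ rather than over all of $V(G)$; everything else is a direct transcription of the definitions of $\mathcal{T}(u_i,\mathcal{H})$ and $\mathcal{C}(H_i)$, combined with the reduction supplied by Theorem~\ref{theoLexiDimensional}.
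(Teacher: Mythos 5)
Your proposal is correct and follows essentially the same route as the paper: extract the twin equivalence classes of $G$ from its adjacency matrix in $O(n^3)$, compute $\mathcal{T}(G\circ\mathcal{H})$ via class-representative values with degree computations $O(n_i^2)$ restricted to vertices in $TT(G)\cup FT(G)$, and handle $\mathcal{C}(\mathcal{H})$ in $O\bigl(\sum_{i=1}^n n_i^3\bigr)$. The only cosmetic difference is that you justify the $O(n_i^3)$ cost of computing $\mathcal{C}(H_i)$ directly (pairwise symmetric differences at $O(n_i)$ each), whereas the paper cites this bound from the reference on $k$-adjacency dimension; both are fine.
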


\begin{proof}
To compute $\mathcal{T}(G\circ\mathcal{H})$ it is first necessary to obtain the twin equivalent classes of $G$. We assume that the graph $G$ is represented by its adjacency matrix $\mathrm{AdjM}_G$. We recall that $\mathrm{AdjM}_G$ is a symmetric $(n\times n)$-matrix given by
$$\mathrm{AdjM}_G(i,j)=\left\{\begin{array}{ll}
                                1, & \mbox{if $u_i\sim u_j$}, \\
                                0, & \mbox{othewise.}
                              \end{array}\right.$$
Now, note that $u_i,u_j$ are twins if and only if for every $u_r\in V(G)-\{u_i,u_j\}$, we have that $\mathrm{AdjM}_G(i,r)=\mathrm{AdjM}_G(j,r)$. Given two twin vertices $u_i,u_j$, if $\mathrm{AdjM}_G(i,j)=1$, then $u_i,u_j$ are true twins, otherwise they are false twins. Note that determining if two vertices are twins can be checked in linear time. In the worst case, when all twin equivalent classes are singletons, it would be necessary to check for any pair of vertices if they are twins or not. Thus, we conclude that determining the twin equivalent classes of $G$ can be computed in $O(n^3)$. Once determined the twin equivalent classes of $G$, we have the following three possibilities for each twin equivalent class $U_G$ of $G$.
\begin{itemize}
\item If $U_G=\{u_i\}$ is a singleton vertex, then we take the order $n_i$ of $H_i$, as the representative value of this class.
\item If $U_G$ is a false twin equivalence class, then we take $\displaystyle\min_{u_j,u_l\in U_G}\{\delta(H_j)+\delta(H_l)+2\}$ as the representative value of this class.
\item If $U_G$ is a true twin equivalence class, then we take $\displaystyle\min_{u_j,u_l\in U_G}\{|V(H_j)|-\Delta(H_j)+|V(H_l)|-\Delta(H_l)\}$ as the representative value of this class.
\end{itemize}
We observe that $\mathcal{T}(G\circ\mathcal{H})$ is the minimum of the representative values of each twin equivalence class. The minimum and maximum degrees $\delta(H_i)$ and $\Delta(H_i)$ of the graphs $H_i$ (of order $n_i$) can be computed in $O(n_i^2)$. So, computing the representative value of each non-singleton twin equivalence class $U_G$ can be done in $O(\sum_{u_i\in U_G} n_i^2)$. Therefore, we can compute the value of $\mathcal{T}(G\circ\mathcal{H})$ in $O(n^3+\sum_{u_i\in TT(G)\cup TF(G)} n_i^2)$.

On the other hand, it was shown in \cite{Estrada-Moreno2014a}, that we can compute $\mathcal{C}(H_i)$ in $O(n_i^3)$ for every $H_i\in\mathcal{C}(\mathcal{H})$. Thus, the complete computation of $\mathcal{C}(\mathcal{H})$ can be performed in $O(\sum_{i=1}^n n_i^3)$, which complete the proof.
\end{proof}

\section{The $k$-metric dimension of $G\circ\mathcal{H}$}\label{sect-k-dim}

Once studied the suitable values of $k$ for which a given graph is $k$-metric dimensional, in this section we focus into obtaining the $k$-metric dimension of the lexicographic product of graphs for these values of $k$, \textit{i.e.}, $k\in\{1,\ldots,\min\{\mathcal{T}(G\circ\mathcal{H}),\mathcal{C}(\mathcal{H})\}\}$.

Note that a trivial upper bound on the $k$-metric dimension of $G\circ\mathcal{H}$ is $|V(G\circ\mathcal{H})|$, which is tight at least for $k=2$. To see this, we can firstly refer to a result shown in \cite{Estrada-Moreno2013}, which states that the $2$-metric dimension of a graph $G$ is equal to its order if and only if $G$ has no singleton twin equivalence classes. Considering this fact, we can conclude the next result.

\begin{remark}
Let $G$ be a connected graph of order $n\ge 2$ and let $\mathcal{H}=\{H_1,\ldots,H_n\}$ be a family of non-trivial graphs. Then $\dim_2(G\circ\mathcal{H})=|V(G\circ\mathcal{H})|$ if and only if the following statements hold.
\begin{enumerate}[{\rm (i)}]
\item For every $u_i\in S(G)$, the graph $H_i\in\mathcal{H}$ has no singleton twin equivalence classes.
\item For every $u_i\in TT(G)$, either the graph $H_i\in\mathcal{H}$ has no singleton twin equivalence classes or $H_i$ has exactly one singleton twin equivalence class $\{v_i\}$, where $\delta(v_i)=n_i-1$, and there exists $u_j\in TT(u_i)$ such that $H_j\in\mathcal{H}$ has a vertex $v_j$ of degree $\delta(v_j)=n_j-1$.
\item For every $u_i\in FT(G)$, either the graph $H_i\in\mathcal{H}$ has no singleton twin equivalence classes or $H_i$ has exactly one singleton twin equivalence class $\{v_i\}$, where $\delta(v_i)=0$, and there exists $u_j\in FT(u_i)$ such that $H_j\in\mathcal{H}$ has a vertex $v_j$ of degree $\delta(v_j)=0$.
\end{enumerate}
\end{remark}

On the other hand, now we give a lower bound for $\dim_k(G\circ\mathcal{H})$, in terms of $\adim_k(H_i)$ for every $H_i\in \mathcal{H}$, which is also tight.

\begin{theorem}\label{lowerBoundLexiAdj}
Let $G$ be a connected graph of order $n\ge 2$ and let $\mathcal{H}=\{H_1,\ldots,H_n\}$ be a family of non-trivial graphs. For any $k\in\{1,\ldots,\min\{\mathcal{T}(G\circ\mathcal{H}),\mathcal{C}(\mathcal{H})\}\}$, $$\dim_k(G\circ\mathcal{H})\ge\sum_{i=1}^n\adim_k(H_i).$$
\end{theorem}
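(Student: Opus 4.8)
The plan is to start from a $k$-metric basis $S$ of $G\circ\mathcal{H}$ and to show that its trace on each copy of $H_i$ projects down to a $k$-adjacency generator of $H_i$; summing the resulting inequalities over $i$ then gives the bound. Concretely, for each $i\in\{1,\ldots,n\}$ I would set $S_i=\{v\in V(H_i):(u_i,v)\in S\}$. Since the vertex set of $G\circ\mathcal{H}$ partitions into the copies $\{u_i\}\times V(H_i)$, we have $|S|=\sum_{i=1}^n|S_i|$, so it suffices to prove $|S_i|\ge\adim_k(H_i)$ for every $i$.

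The heart of the argument is to show that each $S_i$ is a $k$-adjacency generator for $H_i$. Fixing $i$ and two distinct vertices $v_x^i,v_y^i\in V(H_i)$, I would consider the pair $(u_i,v_x^i),(u_i,v_y^i)\in V(G\circ\mathcal{H})$. By Claim \ref{claimLexi} (i), any vertex $(u_j,v)$ with $j\ne i$ satisfies $d_{G\circ\mathcal{H}}((u_i,v_x^i),(u_j,v))=d_G(u_i,u_j)=d_{G\circ\mathcal{H}}((u_i,v_y^i),(u_j,v))$, so no vertex outside the $i$-th copy can distinguish this pair. By Claim \ref{claimLexi} (ii), a vertex $(u_i,w)$ of the same copy distinguishes the pair if and only if $d_{H_i,2}(v_x^i,w)\ne d_{H_i,2}(v_y^i,w)$, that is, if and only if $w\in\mathcal{C}_{H_i}(v_x^i,v_y^i)$. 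Hence the set of elements of $S$ distinguishing $(u_i,v_x^i)$ from $(u_i,v_y^i)$ is exactly $\{u_i\}\times\bigl(\mathcal{C}_{H_i}(v_x^i,v_y^i)\cap S_i\bigr)$.

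Because $S$ is a $k$-metric generator, this distinguishing set has at least $k$ elements, so $|\mathcal{C}_{H_i}(v_x^i,v_y^i)\cap S_i|\ge k$. As $v_x^i,v_y^i$ were arbitrary, $S_i$ satisfies the defining condition of a $k$-adjacency generator for $H_i$, and therefore $|S_i|\ge\adim_k(H_i)$. Here the hypothesis $k\le\mathcal{C}(\mathcal{H})=\min_{H_i\in\mathcal{H}}\{\mathcal{C}(H_i)\}$ is what guarantees, via Theorem \ref{theokadjacency}, that each $H_i$ is at least $k$-adjacency dimensional, so that $\adim_k(H_i)$ is well defined. Summing over $i$ then yields $\dim_k(G\circ\mathcal{H})=|S|=\sum_{i=1}^n|S_i|\ge\sum_{i=1}^n\adim_k(H_i)$.

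I do not anticipate a genuine obstacle: the only delicate point is the clean separation provided by Claim \ref{claimLexi}, namely that a same-copy pair can be resolved only from within its own copy and exactly by the images of its adjacency-distinctive vertices; the rest is bookkeeping on the partition of $S$. The accompanying tightness claim would be handled separately, by exhibiting families attaining equality, and is not needed for the inequality itself.
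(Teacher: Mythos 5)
Your proposal is correct and follows essentially the same route as the paper's proof: both decompose a $k$-metric basis $S$ into the traces $S_i$ on the copies $\{u_i\}\times V(H_i)$, use Claim \ref{claimLexi} (i) and (ii) to show that a same-copy pair is distinguished only by vertices of $\{u_i\}\times\mathcal{C}_{H_i}(v_x^i,v_y^i)$, conclude that each $S_i$ is a $k$-adjacency generator for $H_i$, and sum the inequalities $|S_i|\ge\adim_k(H_i)$. Your write-up is in fact slightly more explicit than the paper's (spelling out why vertices outside the $i$-th copy cannot distinguish, and why $k\le\mathcal{C}(\mathcal{H})$ makes $\adim_k(H_i)$ well defined), but the argument is the same.
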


\begin{proof}
Let $S$ be a $k$-metric basis for $G\circ\mathcal{H}$ and let $S_i=\{v_j^i:(u_i,v_j^i)\in S\}$. By Claim \ref{claimLexi} (i), we deduce that for any $(u_i,v_j^i),(u_i,v_l^i)\in \{u_i\}\times V(H_i)$ it holds that $|\mathcal{D}_{G\circ\mathcal{H}}((u_i,v_j^i),(u_i,v_l^i))\cap (\{u_i\}\times S_i)|\ge k$. Also, by Claim \ref{claimLexi} (ii), $\mathcal{D}_{G\circ\mathcal{H}}((u_i,v_j^i),(u_i,v_l^i))=\{u_i\}\times\mathcal{C}_{H_i}(v_j^i,v_l^i)$ and, as a consequence, $|S_i\cap \mathcal{C}_{H_i}(v_j^i,v_l^i)|\ge k$. Thus, $S_i$ is a $k$-adjacency generator for $H_i$ and we obtain that $|S_i|\ge\adim_k(H_i)$. Therefore, $\dim_k(G\circ\mathcal{H})=|S|=\sum_{i=1}^n |S_i|\ge\sum_{i=1}^n\adim_k(H_i)$.
\end{proof}

Later on, in Theorem \ref{AdjEqual}, we show the tightness of the result above. To this end we need some extra notation. Given a graph $G$ with vertex set $V(G)=\{u_1,u_2,\ldots,u_n\}$ and a  family of graphs $\mathcal{H}=\{H_1,\ldots,H_n\}$, we define the following properties on the triplet $(G,\mathcal{H},k)$.\\

\textbf{Property $\mathcal{P}_1$:}
For any $u_i\in TT(G)$, where $TT(u_i)=\{u_{i_1},u_{i_2},\ldots,u_{i_r}\}$, there exist $i_r$ $k$-adjacency bases $A_{i_1}^t,A_{i_2}^t,\ldots,A_{i_r}^t$ of $H_{i_1} ,H_{i_2},\ldots,H_{i_r}$, respectively, such that for every $j,l\in\{1,\ldots,r\}$, $j\ne l$, and every $x\in V(H_{i_j})$ and $y\in V(H_{i_l})$ it follows, $$|(A_{i_j}^t\cap(V(H_{i_j})-N_{H_{i_j}}(x)))\cup(A_{i_l}^t\cap(V(H_{i_l})-N_{H_{i_l}}(y)))|\ge k.$$

Notice that Property $\mathcal{P}_1$ ensures that for any $u_i,u_j\in TT(G)$, $i\ne j$, there exist two $k$-adjacency bases $A_i^t$, $A_j^t$ of $H_i$, $H_j$, respectively, such that vertices belonging to $\{u_i\}\times H_i$ are distinguished from vertices belonging to $\{u_j\}\times H_j$ by at least $k$ vertices of $(\{u_i\}\times A_i^t)\cup (\{u_j\}\times A_j^t)$.

An example which helps to clarify the above property is, for instance, the triplet $(K_3,\mathcal{H},2)$, where $V(K_3)=\{u_1,u_2,u_3\}$ and $\mathcal{H}=\{C_5^1,C_5^2,C_5^3\}$. Figure \ref{figK_3-C_5} shows the family of graphs $\mathcal{H}$. In this case $TT(u_1)=\{u_1,u_2,u_3\}=TT(K_3)$, since there is only one true twin equivalence class. If we take as $2$-adjacency bases $A_{1_1}=\{v_1^1, v_3^1,v_4^1\}$, $A_{1_2}=\{v_1^2, v_3^2,v_4^2\}$ and $A_{1_3}=\{v_1^3, v_3^3,v_4^3\}$ of $C_5^1,C_5^2$ and $C_5^3$, respectively, then $(K_3,C_5,2)$ satisfies Property $\mathcal{P}_1$. For instance, if $x=v_2^1$ and $y=v_5^3$, then $|(A_{1_1}\cap(V(C_5^1)-N_{C_5^1}(v_2^1)))\cup(A_{1_3}\cap(V(C_5^3)-N_{C_5^3}(v_5^3)))|= |(A_{1_1}\cap\{v_2^1,v_4^1,v_5^1\})\cup(A_{1_3}\cap\{v_2^3,v_3^3,v_5^3\})|=|\{v_4^1\}\cup\{v_3^3\}|=2\ge 2$.\\

\begin{figure}[!ht]
\centering
\begin{tikzpicture}[scale=.9, transform shape, inner sep = 1pt, outer sep = 0pt, minimum size = 15pt]
\foreach \ind in {1,...,5}
\pgfmathparse{-36+360/5*\ind}
\node [draw, xshift=-3 cm, shape=circle, fill=white] (a\ind) at (\pgfmathresult:1) {$v^1_\ind$};
\node [draw, xshift=-3 cm, shape=circle, fill=white, line width=1.5pt] at (36:1) {$v^1_1$};
\node [draw, xshift=-3 cm, shape=circle, fill=white, line width=1.5pt] at (178:1) {$v^1_3$};
\node [draw, xshift=-3 cm, shape=circle, fill=white, line width=1.5pt] at (250:1) {$v^1_4$};
\foreach \ind in {1,...,5}
\pgfmathparse{360/5*\ind}
\node [draw, xshift=3 cm, shape=circle, fill=white] (b\ind) at (\pgfmathresult:1) {$v^2_\ind$};
\node [draw, xshift=3 cm, shape=circle, fill=white, line width=1.5pt] at (72:1) {$v^2_1$};
\node [draw, xshift=3 cm, shape=circle, fill=white, line width=1.5pt] at (216:1) {$v^2_3$};
\node [draw, xshift=3 cm, shape=circle, fill=white, line width=1.5pt] at (288:1) {$v^2_4$};
\foreach \ind in {1,...,5}
\pgfmathparse{18+360/5*\ind}
\node [draw, yshift=4.196 cm, shape=circle, fill=white] (c\ind) at (\pgfmathresult:1) {$v^3_\ind$};
\node [draw, yshift=4.196 cm, shape=circle, fill=white, line width=1.5pt] at (90:1) {$v^3_1$};
\node [draw, yshift=4.196 cm, shape=circle, fill=white, line width=1.5pt] at (234:1) {$v^3_3$};
\node [draw, yshift=4.196 cm, shape=circle, fill=white, line width=1.5pt] at (306:1) {$v^3_4$};
\foreach \ind in {1,...,4}
\pgfmathparse{int(\ind+1)}
\draw (a\ind) -- (a\pgfmathresult);
\draw (a5) -- (a1);
\foreach \ind in {1,...,4}
\pgfmathparse{int(\ind+1)}
\draw (b\ind) -- (b\pgfmathresult);
\draw (b5) -- (b1);
\foreach \ind in {1,...,4}
\pgfmathparse{int(\ind+1)}
\draw (c\ind) -- (c\pgfmathresult);
\draw (c5) -- (c1);
\draw [dashed] (-1.2,.6) -- (1.2,.6);
\draw [dashed] (-1.2,-.6) -- (1.2,-.6);
\draw [dashed] (-1,-.4) -- (1,.4);
\draw [dashed] (1,-.4) -- (-1,.4);

\draw [dashed] (.8,2.7) -- (2,1);
\draw [dashed] (1.3647,4) -- (3.2,1.4);
\draw [dashed] (1,2.7) -- (3,1.4);
\draw [dashed] (1.3647,3.6) -- (2,1.2);

\draw [dashed] (-.8,2.7) -- (-2,1);
\draw [dashed] (-1.3647,4) -- (-3.2,1.4);
\draw [dashed] (-1,2.7) -- (-3,1.4);
\draw [dashed] (-1.3647,3.6) -- (-2,1.2);

\end{tikzpicture}
\caption{Sketch of lexicographic product $K_3\circ C_5$, where the dashed line between two cycles $C_5$ means that each vertex of a cycle is connected to all vertices of the other cycle. The vertices represented by thick lines form a $2$-adjacency basis of each copy of $C_5$.}
\label{figK_3-C_5}
\end{figure}
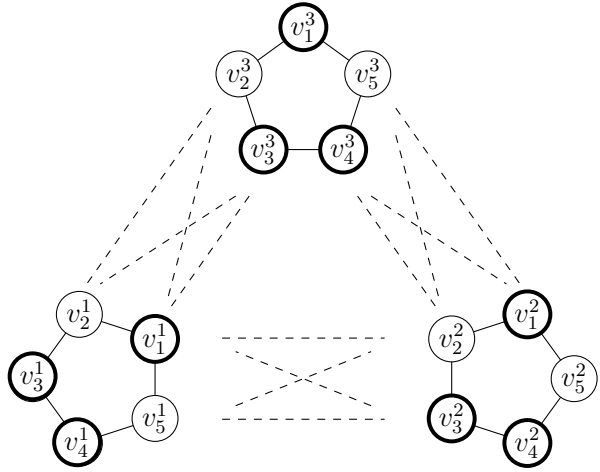

\textbf{Property $\mathcal{P}_2$:}
For any $u_i\in FT(G)$, where $FT(u_i)=\{u_{i_1},u_{i_2},\ldots,u_{i_r}\}$, there exist $i_r$ $k$-adjacency bases $A_{i_1}^f,A_{i_2}^f,\ldots,A_{i_r}^f$ of $H_{i_1} ,H_{i_2},\ldots,H_{i_r}$, respectively, such that for every $j,l\in\{1,\ldots,r\}$, $j\ne l$, and every $x\in V(H_{i_j})$ and $y\in V(H_{i_l})$ it follows, $$|(A_{i_j}^f\cap N_{H_{i_j}}[x])\cup(A_{i_l}^f\cap N_{H_{i_l}}[y])|\ge k.$$

Notice that Property $\mathcal{P}_2$ ensures that for any $u_i,u_j\in FT(G)$, $i\ne j$, there exist two $k$-adjacency bases $A_i^f$, $A_j^f$ of $H_i$, $H_j$, respectively, such that vertices belonging to $\{u_i\}\times H_i$ are distinguished from vertices belonging to $\{u_j\}\times H_j$ by at least $k$ vertices of $\left(\{u_i\}\times A_i^f\right)\cup \left(\{u_j\}\times A_j^f\right)$.

Further on we will see a triplet $(G,\mathcal{H},k)$ that satisfy Properties $\mathcal{P}_1$ and $\mathcal{P}_2$ at the same time, when $\mathcal{H}$ is a family of paths of order greater than three and/or cycles of order greater than four, $G$ is any non-trivial connected graph and $k\in\{2,3\}$.

To continue our exposition we need some extra notation. Given a family of graphs $\mathcal{H}=\{H_1,\ldots,H_n\}$, we define $\mathcal{\overline{H}}$ as the family of complement graphs of each $H_i\in\mathcal{H}$, \textit{i.e.}, $\mathcal{\overline{H}}=\{\overline{H}_1,\ldots,\overline{H}_n\}$.
We see now how Properties $\mathcal{P}_1$ and $\mathcal{P}_2$ behave for the case of the triplet $(G,\overline{\mathcal{H}},k)$. To this end, we need to define two other properties on the triplet $(G,\mathcal{H},k)$.\\

\textbf{Property $\mathcal{P}_3$:}
For any $u_i\in TT(G)$, where $TT(u_i)=\{u_{i_1},u_{i_2},\ldots,u_{i_r}\}$, there exist $i_r$ $k$-adjacency bases $A_{i_1}^t,A_{i_2}^t,\ldots,A_{i_r}^t$ of $H_{i_1} ,H_{i_2},\ldots,H_{i_r}$, respectively, such that for every $j,l\in\{1,\ldots,r\}$, $j\ne l$, and every $x\in V(H_{i_j})$ and $y\in V(H_{i_l})$, it follows $$|(A_{i_j}^t\cap N_{H_{i_j}}(x))\cup(A_{i_l}^t\cap N_{H_{i_l}}(y))|\ge k.$$

\textbf{Property $\mathcal{P}_4$:}
For any $u_i\in FT(G)$, where $FT(u_i)=\{u_{i_1},u_{i_2},\ldots,u_{i_r}\}$, there exist $i_r$ $k$-adjacency bases $A_{i_1}^f,A_{i_2}^f,\ldots,A_{i_r}^f$ of $H_{i_1} ,H_{i_2},\ldots,H_{i_r}$, respectively, such that for every $j,l\in\{1,\ldots,r\}$, $j\ne l$, and every $x\in V(H_{i_j})$ and $y\in V(H_{i_l})$ it follows, $$|(A_{i_j}^f\cap(V(H_{i_j})-N_{H_{i_j}}(x)))\cup(A_{i_l}^f\cap(V(H_{i_l})-N_{H_{i_l}}(y)))|\ge k.$$

Next claim relates all the above properties while using them in $(G,\mathcal{H},k)$ or $(G,\overline{\mathcal{H}},k)$.

\begin{claim}\label{ClaimP3-P4}
Let $G$ be a graph with vertex set $V(G)=\{u_1,u_2,\ldots,u_n\}$ and let $\mathcal{H}=\{H_1,\ldots,H_n\}$ be a family of $n$ graphs. Then,
\begin{enumerate}[{\rm (i)}]
\item the triplet $(G,\overline{\mathcal{H}},k)$ satisfies Property $\mathcal{P}_1$ if and only if $(G,\mathcal{H},k)$ satisfies Property $\mathcal{P}_3$,
\item the triplet $(G,\overline{\mathcal{H}},k)$ satisfies Property $\mathcal{P}_2$ if and only if $(G,\mathcal{H},k)$ satisfies Property $\mathcal{P}_4$.
\end{enumerate}
\end{claim}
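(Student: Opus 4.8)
The plan is to prove each equivalence by showing that, once the family $\mathcal{H}$ is replaced by $\overline{\mathcal{H}}$, the defining inequality of the property on the left becomes, verbatim, the defining inequality of the property on the right, so that the ``if and only if'' is immediate. Three observations reduce this to pure bookkeeping. First, for each $i$ the sets $\mathcal{C}_{H_i}(x,y)$ and $\mathcal{C}_{\overline{H}_i}(x,y)$ coincide: off $\{x,y\}$ one has $N_{H_i}(x)\triangledown N_{H_i}(y)=N_{\overline{H}_i}(x)\triangledown N_{\overline{H}_i}(y)$, and $\{x,y\}$ is adjoined in either case, whence by the generator characterization a set $A\subseteq V(H_i)$ is a $k$-adjacency basis of $H_i$ if and only if it is one of $\overline{H}_i$ (consistently with $\adim_k(H_i)=\adim_k(\overline{H}_i)$). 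Thus $H_i$ and $\overline{H}_i$ admit exactly the same families of candidate bases. Second, the twin structure, and hence the classes $TT(G)$ and $FT(G)$ over which the properties quantify, depends only on $G$ and is unaffected by $\mathcal{H}\mapsto\overline{\mathcal{H}}$. Third, since $\overline{\overline{H}_i}=H_i$, complementation is an involution, so it suffices to rewrite one side and the equivalence in both directions follows.

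The only computation needed is the behaviour of neighbourhoods under complementation: for every $x\in V(H_i)$ one has $N_{\overline{H}_i}(x)=V(H_i)-N_{H_i}[x]$, and consequently $V(\overline{H}_i)-N_{\overline{H}_i}(x)=N_{H_i}[x]$ and $N_{\overline{H}_i}[x]=V(H_i)-N_{H_i}(x)$. For part (ii) this settles the matter at once. Writing Property $\mathcal{P}_2$ for the triplet $(G,\overline{\mathcal{H}},k)$, the set $N_{\overline{H}_{i_j}}[x]$ occurring there equals $V(H_{i_j})-N_{H_{i_j}}(x)$, which is precisely the set featured in Property $\mathcal{P}_4$ for $(G,\mathcal{H},k)$. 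Since the candidate bases and the index set $FT(G)$ are the same for both triplets, the two defining inequalities are literally the same statement, and the equivalence follows.

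For part (i) I would argue analogously, now quantifying over $TT(G)$: applying $V(\overline{H}_{i_j})-N_{\overline{H}_{i_j}}(x)=N_{H_{i_j}}[x]$ rewrites the inequality defining $\mathcal{P}_1$ for $(G,\overline{\mathcal{H}},k)$ in terms of the $H_{i_j}$-neighbourhoods of $x$ and $y$, which is then matched with the inequality defining $\mathcal{P}_3$ for $(G,\mathcal{H},k)$. The single point requiring care, and essentially the only obstacle in the whole argument, is the precise tracking of the contributions of the vertices $x$ and $y$ themselves when passing between open and closed neighbourhoods, i.e. checking that the $\{x\}$ and $\{y\}$ terms are accounted for consistently on both sides; once this is verified, everything else is the routine substitution of the three complement identities together with the invariance of the basis families and of the class $TT(G)$.
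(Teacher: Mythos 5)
Your treatment of part (ii) is correct and is essentially the paper's own argument: since $\mathcal{C}_{H_i}(x,y)=\mathcal{C}_{\overline{H}_i}(x,y)$, the graphs $H_i$ and $\overline{H}_i$ admit exactly the same $k$-adjacency bases, the twin classes of $G$ are untouched, and the exact identity $N_{\overline{H}_i}[x]=V(H_i)-N_{H_i}(x)$ turns the inequality of $\mathcal{P}_2$ for $(G,\overline{\mathcal{H}},k)$ verbatim into that of $\mathcal{P}_4$ for $(G,\mathcal{H},k)$, so the equivalence in (ii) is a literal rewriting.

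Part (i) is where your proof stops short, and the step you defer (``once this is verified'') is not a formality: it cannot be verified in general. After the substitution $V(\overline{H}_{i_j})-N_{\overline{H}_{i_j}}(x)=N_{H_{i_j}}[x]$, Property $\mathcal{P}_1$ for $(G,\overline{\mathcal{H}},k)$ reads $|(A_{i_j}^t\cap N_{H_{i_j}}[x])\cup(A_{i_l}^t\cap N_{H_{i_l}}[y])|\ge k$, with \emph{closed} neighbourhoods, whereas $\mathcal{P}_3$ for $(G,\mathcal{H},k)$ demands $|(A_{i_j}^t\cap N_{H_{i_j}}(x))\cup(A_{i_l}^t\cap N_{H_{i_l}}(y))|\ge k$, with \emph{open} neighbourhoods; the two sets differ exactly by the possible membership of $x$ in $A_{i_j}^t$ and of $y$ in $A_{i_l}^t$. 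Consequently only one implication is automatic, namely that $\mathcal{P}_3$ for $(G,\mathcal{H},k)$ implies $\mathcal{P}_1$ for $(G,\overline{\mathcal{H}},k)$, because $N_{H}(x)\subseteq N_{H}[x]$. The reverse implication asserted in the claim can actually fail: take $G=K_2$ (two true twins), $H_1=H_2=K_2$ and $k=1$. Every $1$-adjacency basis of $K_2$ is a single vertex, so choosing $x$ and $y$ to be the basis vertices gives $A_1\cap N_{H_1}(x)=A_2\cap N_{H_2}(y)=\emptyset$ and $\mathcal{P}_3$ fails for every choice of bases; yet each $\overline{H}_i$ is the empty graph on two vertices, so $V(\overline{H}_i)-N_{\overline{H}_i}(x)=V(\overline{H}_i)$ and $\mathcal{P}_1$ holds trivially for $(G,\overline{\mathcal{H}},1)$. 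You should know that the paper's own proof glosses over precisely the point you flagged: it rests on the identity $A_i^t\cap N_{H_i}(v_j)=A_i^t\cap(V(\overline{H}_i)-N_{\overline{H}_i}(v_j))$, which is valid only when $v_j\notin A_i^t$. The honest conclusion is that part (i) holds as the single implication from $\mathcal{P}_3$ for $(G,\mathcal{H},k)$ to $\mathcal{P}_1$ for $(G,\overline{\mathcal{H}},k)$ --- which your substitution argument does establish, and which is the only direction invoked later in the proof of Theorem \ref{AdjEqual} (ii) --- and not as an unrestricted equivalence.
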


\begin{proof}
For any $u_i\in TT(G)$ and any $v_j\in V(H_i)$ we have that $A_i^t\cap N_{H_i}(v_j)=A_i^t\cap (V(\overline{H}_i)-N_{\overline{H}_i}(v_j))$. Thus, the set of $k$-adjacency bases $\{A_{i_1}^t,A_{i_2}^t,\ldots,A_{i_r}^t\}$ which makes that the triplet $(G,\mathcal{\overline{H}},k)$ satisfies Property $\mathcal{P}_1$, also makes that $(G,\mathcal{H},k)$ satisfies Property $\mathcal{P}_3$ and vice versa. Therefore (i) follows. The item (ii) follows similarly from the fact that for any $u_i\in FT(G)$ and any $v_j\in V(H_i)$ we have that $A_i^f\cap (V(H_i)-N_{H_i}(v_i))= A_i^f\cap N_{\overline{H}_i}[v_i]$.
\end{proof}

In this point we are able to give one of the main results of this work and its powerful consequences.

\begin{theorem}\label{AdjEqual}
Let $G$ be a connected graph of order $n\ge 2$, let $\mathcal{H}$ be a family of $n$ non-trivial graphs and let $k\in\{1,\ldots,\min\{\mathcal{T}(G\circ\mathcal{H}),\mathcal{C}(\mathcal{H})\}\}$.
\begin{enumerate}[{\rm (i)}]
\item If the triplet $(G,\mathcal{H},k)$ satisfies Properties $\mathcal{P}_1$ and $\mathcal{P}_2$, then $$\dim_k(G\circ\mathcal{H})=\sum_{i=1}^n\adim_k(H_i).$$
\item If the triplet $(G,\mathcal{H},k)$ satisfies Properties $\mathcal{P}_3$ and $\mathcal{P}_4$, then $$\dim_k(G\circ\mathcal{\overline{H}})=\sum_{i=1}^n\adim_k(H_i).$$
\end{enumerate}
\end{theorem}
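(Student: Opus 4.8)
The plan is to derive part~(i) by proving the upper bound $\dim_k(G\circ\mathcal{H})\le\sum_{i=1}^n\adim_k(H_i)$, since the reverse inequality is exactly Theorem~\ref{lowerBoundLexiAdj}; part~(ii) will then follow from part~(i) by passing to complements. For the upper bound I would build an explicit $k$-metric generator $S$ of the required cardinality. Using that the twin equivalence classes of $G$ partition $V(G)$, I choose for each $u_i$ a single $k$-adjacency basis $A_i$ of $H_i$: any basis if $u_i\in S(G)$; the basis $A_i^t$ furnished by Property~$\mathcal{P}_1$ if $u_i$ belongs to a true twin class; and the basis $A_i^f$ furnished by Property~$\mathcal{P}_2$ if $u_i$ belongs to a false twin class. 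Setting $S=\bigcup_{i=1}^n(\{u_i\}\times A_i)$, we get $|S|=\sum_{i=1}^n\adim_k(H_i)$, and it remains to verify that $S$ is a $k$-metric generator.

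The verification follows the case analysis already carried out in the proof of Theorem~\ref{theoLexiDimensional}, applied to a generic pair $(u_i,v_x^i),(u_j,v_y^j)$. If $i=j$, Claim~\ref{claimLexi}(ii) gives distinctive set $\{u_i\}\times\mathcal{C}_{H_i}(v_x^i,v_y^i)$, so $|S\cap\mathcal{D}_{G\circ\mathcal{H}}|=|A_i\cap\mathcal{C}_{H_i}(v_x^i,v_y^i)|\ge k$ because $A_i$ is a $k$-adjacency generator. If $i\ne j$ and $u_i,u_j$ are not twins, then $\mathcal{D}_G^*(u_i,u_j)\ne\emptyset$ and Claim~\ref{claimLexi}(i) shows $\{u_l\}\times V(H_l)$ is distinctive for any $u_l\in\mathcal{D}_G^*(u_i,u_j)$; hence $\{u_l\}\times A_l\subseteq S$ lies in the distinctive set and $|A_l|\ge k$. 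If $u_i,u_j$ are true twins, then $u_i\sim u_j$ and the distinctive set equals $(\{u_i\}\times(V(H_i)-N_{H_i}(v_x^i)))\cup(\{u_j\}\times(V(H_j)-N_{H_j}(v_y^j)))$; since $u_i,u_j$ lie in the same true twin class, $A_i=A_i^t$ and $A_j=A_j^t$ come from the same instance of Property~$\mathcal{P}_1$, whose inequality (with $x=v_x^i$, $y=v_y^j$) yields at least $k$ elements of $S$ in the distinctive set. The false twin case is identical, replacing $d_G(u_i,u_j)=2$, the distinctive set $(\{u_i\}\times N_{H_i}[v_x^i])\cup(\{u_j\}\times N_{H_j}[v_y^j])$, and Property~$\mathcal{P}_2$. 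This establishes part~(i).

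For part~(ii) I would observe that the construction and verification above invoke only the bound $k\le\mathcal{C}(\mathcal{H})$ (guaranteeing each $H_i$ has a $k$-adjacency basis) together with Properties~$\mathcal{P}_1$ and~$\mathcal{P}_2$. By Claim~\ref{ClaimP3-P4}, the assumption that $(G,\mathcal{H},k)$ satisfies $\mathcal{P}_3$ and $\mathcal{P}_4$ is equivalent to $(G,\overline{\mathcal{H}},k)$ satisfying $\mathcal{P}_1$ and $\mathcal{P}_2$; moreover $\mathcal{C}(\overline{\mathcal{H}})=\mathcal{C}(\mathcal{H})$ and $\adim_k(\overline{H}_i)=\adim_k(H_i)$, both coming from $N_G(x)\triangledown N_G(y)=N_{\overline{G}}(x)\triangledown N_{\overline{G}}(y)$. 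Applying the part~(i) construction to the family $\overline{\mathcal{H}}$ therefore produces a $k$-metric generator of $G\circ\overline{\mathcal{H}}$ of size $\sum_{i=1}^n\adim_k(\overline{H}_i)=\sum_{i=1}^n\adim_k(H_i)$; this in particular certifies that $\dim_k(G\circ\overline{\mathcal{H}})$ is well defined, whence the matching lower bound of Theorem~\ref{lowerBoundLexiAdj} gives equality.

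The step I expect to be most delicate is the bookkeeping in the two twin cases. One must first reduce $\mathcal{D}_{G\circ\mathcal{H}}((u_i,v_x^i),(u_j,v_y^j))$ to precisely the union of a subset of $\{u_i\}\times V(H_i)$ and a subset of $\{u_j\}\times V(H_j)$ appearing in $\mathcal{P}_1$ or $\mathcal{P}_2$, and then use that these two pieces sit in disjoint copies, so that the cardinality of their union equals the sum and the inequality supplied by the property translates verbatim into ``$\ge k$ distinguishing vertices of $S$''. A secondary subtlety, handled in part~(ii), is confirming that the complemented family keeps $k$ admissible for $G\circ\overline{\mathcal{H}}$; this is exactly where one uses that the order, degree and maximum-degree data defining $\mathcal{T}$ interchange their true- and false-twin roles under complementation.
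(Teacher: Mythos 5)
Your proposal matches the paper's own proof essentially step for step: the same generator $\bigcup_i(\{u_i\}\times A_i)$ built from the bases supplied by Properties $\mathcal{P}_1$ and $\mathcal{P}_2$ (arbitrary bases on singleton classes), the same four-case verification via Claim~\ref{claimLexi} (same copy, true twins, false twins, non-twins), the matching lower bound from Theorem~\ref{lowerBoundLexiAdj}, and part~(ii) obtained through Claim~\ref{ClaimP3-P4} together with $\adim_k(\overline{H}_i)=\adim_k(H_i)$. Your added remark that the explicit generator certifies $\dim_k(G\circ\overline{\mathcal{H}})$ is well defined is a small tidying of a point the paper passes over with ``proceeding analogously,'' not a different argument.
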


\begin{proof}
(i) We assume that the triplet $(G,\mathcal{H},k)$ satisfies Properties $\mathcal{P}_1$ and $\mathcal{P}_2$. If $u_i\in TT(G)$ or $u_i\in FT(G)$, then we take a $k$-adjacency basis $A_i^t$  or $A_i^f$ of $H_i$ as defined in Property $\mathcal{P}_1$ or Property $\mathcal{P}_2$, respectively. Also, if $u_i\in V(G)$ is not a twin vertex, then we take any $k$-adjacency basis $A_i$ of $H_i$.

We claim that $$B=\left( \bigcup_{u_i\in TT(G)}\{u_i\}\times A_i^t \right)\cup \left( \bigcup_{u_i\in FT(G)}\{ u_i\}\times A_i^f \right)\cup  \left( \bigcup_{u_i\not\in TT(G)\cup FT(G)}\{ u_i\}\times A_i  \right) .$$    is a $k$-metric generator for $G\circ\mathcal{H}$.

We differentiate the following four cases for  two different vertices  $(u_i,v_r^i),(u_j,v_s^j)\in V(G\circ\mathcal{H})$.\\
\\Case 1. $i=j$. In this case $r\ne s$. We have three possibilities for the vertex $u_i$
\begin{itemize}
\item $u_i\in TT(G)$, in which case   $B\cap (\{u_i\}\times V(H_i))=\{u_i\}\times A_i^t$,
\item $u_i\in FT(G)$, in which case $B\cap (\{u_i\}\times V(H_i))=\{u_i\}\times A_i^f$,
\item $u_i\not \in TT(G)\cup FT(G)$, in which case   $B\cap (\{u_i\}\times V(H_i))=\{u_i\}\times A_i $.
\end{itemize}

Since $A_i^t$, $A_i^f$ and $A_i$ are  $k$-adjacency bases of $H_i$, we obtain that $|\mathcal{C}_{H_i}(v_r^i,v_s^i)\cap A_i^t|\ge k$, $|\mathcal{C}_{H_i}(v_r^i,v_s^i)\cap A_i^f|\ge k$ and $|\mathcal{C}_{H_i}(v_r^i,v_s^i)\cap A_i|\ge k$.
In any case, as $\mathcal{D}_{G\circ\mathcal{H}}((u_i,v_r^i),(u_i,v_s^i))=\{u_i\}\times\mathcal{C}_{H_i}(v_r^i,v_s^i)$, we conclude that $|B\cap \mathcal{D}_{G\circ\mathcal{H}}((u_i,v_r^i),(u_i,v_s^i))|\ge k$.\\
\\Case 2. $i\ne j$ and $u_i,u_j$ are true twins. So $\mathcal{D}_{G\circ\mathcal{H}}((u_i,v_r^i),(u_j,v_s^j))=(V(H_i)-N_{H_i}(v_r^i))\cup(V(H_j)-N_{H_j}(v_s^j))$. Since $(G,\mathcal{H},k)$ satisfies Property $\mathcal{P}_1$, there exist at least $k$ elements of $(\{u_i\}\times A_i^t)\cup (\{u_j\}\times A_j^t)\subseteq B$ distinguishing $(u_i,v_r^i),(u_j,v_s^j)$.\\
\\Case 3. $i\ne j$ and $u_i,u_j$ are false twins. Thus $\mathcal{D}_{G\circ\mathcal{H}}((u_i,v_r^i),(u_j,v_s^j))=N_{H_i}[v_r^i]\cup N_{H_j}[v_s^j]$. Since $(G,\mathcal{H},k)$ satisfies Property $\mathcal{P}_2$, there exist at least $k$ elements of $(\{u_i\}\times A_i^f)\cup (\{u_j\}\times A_j^f)\subseteq B$ distinguishing $(u_i,v_r^i),(u_j,v_s^j)$.\\
\\Case 4. $i\ne j$ and $u_i,u_j$ are not twins. Hence, there exists $u_l\in\mathcal{D}_G^*(u_i,u_j)$. So, the  set $B\cap(\{u_l\}\times V(H_l))$  is either $\{u_l\}\times A_l^t$ or $\{u_l\}\times A_l^f$ or $\{u_l\}\times A_l$. Hence,
 $(u_i,v_r^i)$ and $(u_j,v_s^j)$ are
distinguished by, at least $k$ elements of $B$.

Therefore, $B$ is a $k$-metric generator for $G\circ\mathcal{H}$, and consequently, $\dim_k(G\circ\mathcal{H})\le |B|=\sum_{i=1}^n\adim_k(H_i)$. By Theorem \ref{lowerBoundLexiAdj}, we conclude the proof of (i).\\

\noindent (ii) We assume that the triplet $(G,\mathcal{H},k)$ satisfies Properties $\mathcal{P}_3$ and $\mathcal{P}_4$. By Claim \ref{ClaimP3-P4}, the triplet $(G,\mathcal{\overline{H}},k)$ satisfies Properties $\mathcal{P}_1$ and $\mathcal{P}_2$. Proceeding analogously to the proof of (i) and given that $\adim_k(\overline{H}_i)=\adim_k(H_i)$ for every $H_i\in\mathcal{H}$, we have that $\dim_k(G\circ\mathcal{\overline{H}})=\sum_{i=1}^n\adim_k(\overline{H}_i)=\sum_{i=1}^n\adim_k(H_i)$.
\end{proof}

The theorem above is a generalization for $k\in\{1,\ldots,\mathcal{C}(\mathcal{H})\}$ of a result obtained by Jannesari and Omoomi \cite{JanOmo2012} for the metric dimension of $G\circ H$, \textit{i.e}, for $\dim_k(G\circ\mathcal{H})$ when $k=1$ and the graphs belonging to $\mathcal{H}$ are isomorphic to the same graph $H$.

Assume now that the $k$-adjacency dimension of every graph of a given family $\mathcal{H}'$ is known. Hence, as a measure of the reach of Theorem \ref{AdjEqual}, the following consequences are deduced. Notice that we can then compute, not only the $k$-metric dimension of $G\circ\mathcal{H}'$, but also that of $G\circ\mathcal{\overline{H}'}$, for a huge quantity of graphs $G$. If $G$ is a connected graph of order $n\ge 2$ and $\mathcal{H}$ is a family of $n$ non-trivial graphs, then Theorem \ref{AdjEqual} gives us the conditions for which the problem of computing the $k$-metric dimension of $G\circ\mathcal{H}$ and $G\circ\mathcal{\overline{H}}$ is reduced to computing the $k$-adjacency dimension of the graphs $H_i\in \mathcal{H}$.

\begin{corollary}\label{AllComb_TT_FT}
Let $G$ be a connected graph of order $n\ge 2$, let $\mathcal{H}$ be a family of $n$ non-trivial graphs and let $k\in\{1,\ldots,\min\{\mathcal{T}(G\circ\mathcal{H}),\mathcal{C}(\mathcal{H})\}\}$. Then the following statements hold.
\begin{enumerate}[{\rm (i)}]
\item If $G$ is twins free, then $$\dim_k(G\circ\mathcal{H})=\dim_k(G\circ\mathcal{\overline{H}})=\sum_{i=1}^n\adim_k(H_i).$$
\item If $G$ is false twins free and $(G,\mathcal{H},k)$ holds Property $\mathcal{P}_1$, $$\dim_k(G\circ\mathcal{H})=\sum_{i=1}^n\adim_k(H_i).$$
\item If $G$ is true twins free  and $(G,\mathcal{H},k)$ holds Property $\mathcal{P}_2$, $$\dim_k(G\circ\mathcal{H})=\sum_{i=1}^n\adim_k(H_i).$$
\end{enumerate}
\end{corollary}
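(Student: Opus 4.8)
The plan is to read off all three items directly from Theorem \ref{AdjEqual}, exploiting the fact that each of the Properties $\mathcal{P}_1$, $\mathcal{P}_2$, $\mathcal{P}_3$, $\mathcal{P}_4$ is a universally quantified statement ranging only over twin classes of one specific type. Hence each such property is \emph{vacuously} satisfied whenever the corresponding type of twin class is absent from $G$. First I would record this observation precisely: Property $\mathcal{P}_1$ (and likewise $\mathcal{P}_3$) opens with ``for any $u_i\in TT(G)$'', so if $G$ is true twins free, meaning $TT(G)=\emptyset$, then both $\mathcal{P}_1$ and $\mathcal{P}_3$ hold vacuously for the triplet $(G,\mathcal{H},k)$. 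Symmetrically, Property $\mathcal{P}_2$ (and $\mathcal{P}_4$) opens with ``for any $u_i\in FT(G)$'', so if $G$ is false twins free, meaning $FT(G)=\emptyset$, then both $\mathcal{P}_2$ and $\mathcal{P}_4$ hold vacuously.

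With this in hand, the three items follow by combining the vacuous cases with the stated hypotheses. For item (i), a twins free graph satisfies $TT(G)=FT(G)=\emptyset$, so all four properties hold vacuously; applying Theorem \ref{AdjEqual} (i) gives $\dim_k(G\circ\mathcal{H})=\sum_{i=1}^n\adim_k(H_i)$, while applying Theorem \ref{AdjEqual} (ii) (using that $\mathcal{P}_3$ and $\mathcal{P}_4$ also hold vacuously) gives $\dim_k(G\circ\mathcal{\overline{H}})=\sum_{i=1}^n\adim_k(H_i)$, which together yield the full chain of equalities. For item (ii), false twins free supplies $\mathcal{P}_2$ for free and $\mathcal{P}_1$ is assumed by hypothesis, so both premises of Theorem \ref{AdjEqual} (i) are met and the conclusion follows. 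For item (iii), true twins free supplies $\mathcal{P}_1$ for free and $\mathcal{P}_2$ is assumed, so once again Theorem \ref{AdjEqual} (i) applies verbatim.

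Since no new construction of a generator is required here, the entire substance of the argument already lives inside Theorem \ref{AdjEqual}; this corollary merely identifies three natural structural regimes in which its hypotheses are automatically or cheaply met. Accordingly, I do not anticipate any genuine obstacle. The one point deserving explicit care is the vacuous-satisfaction step: one must justify that the universally quantified conditions $\mathcal{P}_1$ through $\mathcal{P}_4$ impose no constraint when the relevant twin class is empty, so that the absence of true twins (respectively false twins) genuinely discharges $\mathcal{P}_1$ and $\mathcal{P}_3$ (respectively $\mathcal{P}_2$ and $\mathcal{P}_4$) rather than leaving them unverifiable. Once this is stated cleanly, the three claims are immediate specializations of the main theorem.
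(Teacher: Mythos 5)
Your proof is correct and matches the paper's (implicit) argument: the corollary is stated as an immediate consequence of Theorem \ref{AdjEqual}, with exactly the observation you make---that $\mathcal{P}_1$/$\mathcal{P}_3$ are vacuous when $TT(G)=\emptyset$ and $\mathcal{P}_2$/$\mathcal{P}_4$ are vacuous when $FT(G)=\emptyset$---doing all the work. Your explicit handling of the vacuous-satisfaction step, and the use of Theorem \ref{AdjEqual}(ii) for the $G\circ\overline{\mathcal{H}}$ equality in item (i), is precisely what the paper intends.
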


A natural question is now the following one. Can we realize triplets $(G,\mathcal{H},k)$ satisfying Properties $\mathcal{P}_1$, $\mathcal{P}_2$, $\mathcal{P}_3$ or $\mathcal{P}_4$? To proceed in this direction, we first need to present some useful lemmas which allow us to describe some realizations of the triplet $(G,\mathcal{H},k)$ in concordance with Properties $\mathcal{P}_1$ and $\mathcal{P}_2$.

\begin{lemma}\label{DeltaTrue}
Let $G$ be a connected graph of order $n\ge 2$ and let $\mathcal{H}$ be a family of $n$ non-trivial graphs. If $\adim_k(H_i)-\Delta(H_i)\ge\lceil\frac{k}{2}\rceil$ for every $H_i\in\mathcal{H}$ and $k\in\{1,\ldots,\min\{\mathcal{T}(G\circ\mathcal{H}),\mathcal{C}(\mathcal{H})\}\}$, then $(G,\mathcal{H},k)$ satisfies Properties $\mathcal{P}_1$ and $\mathcal{P}_4$.
\end{lemma}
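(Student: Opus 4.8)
The plan is to make a single, uniform choice of bases and reduce everything to one cardinality estimate. For each graph $H_i$ I would simply take an arbitrary $k$-adjacency basis $A_i$, and use this same $A_i$ in the role of $A_i^t$ when $u_i\in TT(G)$ (for Property $\mathcal{P}_1$) and in the role of $A_i^f$ when $u_i\in FT(G)$ (for Property $\mathcal{P}_4$). No clever or coordinated selection across the copies is required: the hypothesis $\adim_k(H_i)-\Delta(H_i)\ge\lceil\frac{k}{2}\rceil$ will be strong enough that one fixed basis per copy works for all vertices and all pairs simultaneously.

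The key step is the following elementary inequality. For any $k$-adjacency basis $A$ of a graph $H$ and any vertex $x\in V(H)$,
$$|A\cap(V(H)-N_H(x))|=|A|-|A\cap N_H(x)|\ge\adim_k(H)-\Delta(H),$$
since $|A|=\adim_k(H)$ and $|A\cap N_H(x)|\le|N_H(x)|\le\Delta(H)$. Under the hypothesis, the right-hand side is at least $\lceil\frac{k}{2}\rceil$, so each individual term appearing in Properties $\mathcal{P}_1$ and $\mathcal{P}_4$ already contributes at least $\lceil\frac{k}{2}\rceil$ elements.

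Next I would verify Property $\mathcal{P}_1$. Take any $u_i\in TT(G)$ with $TT(u_i)=\{u_{i_1},\ldots,u_{i_r}\}$ and let each $A_{i_m}^t$ be an arbitrary $k$-adjacency basis of $H_{i_m}$. Fix $j\ne l$ and arbitrary $x\in V(H_{i_j})$, $y\in V(H_{i_l})$. Because distinct copies carry distinct superscripts, the vertex sets $V(H_{i_j})$ and $V(H_{i_l})$ are disjoint, so the two sets $A_{i_j}^t\cap(V(H_{i_j})-N_{H_{i_j}}(x))$ and $A_{i_l}^t\cap(V(H_{i_l})-N_{H_{i_l}}(y))$ are disjoint; applying the estimate above to each term gives
$$|(A_{i_j}^t\cap(V(H_{i_j})-N_{H_{i_j}}(x)))\cup(A_{i_l}^t\cap(V(H_{i_l})-N_{H_{i_l}}(y)))|\ge\left\lceil\tfrac{k}{2}\right\rceil+\left\lceil\tfrac{k}{2}\right\rceil=2\left\lceil\tfrac{k}{2}\right\rceil\ge k,$$
which is exactly the defining inequality of $\mathcal{P}_1$; since the bound holds for every $j\ne l$ and every $x,y$, the single choice of bases works uniformly. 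Property $\mathcal{P}_4$ is handled word for word the same way, its defining inequality being identical except that $TT(G)$ is replaced by $FT(G)$ and the bases are called $A_i^f$.

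I do not expect a genuine obstacle here; the argument is short and the proof is essentially a two-line counting lemma combined with the disjointness of distinct copies. The only two points that warrant explicit care are the additivity step, where I must stress that the union ranges over disjoint copies so that the cardinalities add (rather than merely being bounded below by one summand), and the trivial but necessary arithmetic fact that $2\lceil\frac{k}{2}\rceil\ge k$ for every positive integer $k$ (with equality when $k$ is even). Spelling out the disjointness via the distinct superscripts $i_j,i_l$ is what makes the doubling legitimate.
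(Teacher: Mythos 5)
Your proposal is correct and follows essentially the same route as the paper's own proof: take arbitrary $k$-adjacency bases, use $|A_i\cap(V(H_i)-N_{H_i}(x))|\ge \adim_k(H_i)-\Delta(H_i)\ge\lceil\frac{k}{2}\rceil$ for each copy, and sum the two disjoint contributions to get $2\lceil\frac{k}{2}\rceil\ge k$, which verifies the common defining inequality of Properties $\mathcal{P}_1$ and $\mathcal{P}_4$ simultaneously. Your write-up merely makes explicit two points the paper leaves implicit (the disjointness of distinct copies and the arithmetic $2\lceil\frac{k}{2}\rceil\ge k$), which is fine.
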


\begin{proof}
Let $A_i,A_j$, $i\ne j$, be two $k$-adjacency bases of $H_i,H_j\in\mathcal{H}$, respectively. Since $\adim_k(H_i)-\Delta(H_i)\ge\lceil\frac{k}{2}\rceil$ and $\adim_k(H_j)-\Delta(H_j)\ge\lceil\frac{k}{2}\rceil$, it follows that for every $v\in V(H_i)$ and $w\in V(H_j)$, $|A_i-N_{H_i}(v)|\ge\lceil\frac{k}{2}\rceil$ and $|A_j-N_{H_j}(w)|\ge\lceil\frac{k}{2}\rceil$. Thus, we deduce $|A_i\cap(V(H_i)-N_{H_i}(v))|\ge\lceil\frac{k}{2}\rceil$ and $|A_j\cap(V(H_j)-N_{H_j}(w))|\ge\lceil\frac{k}{2}\rceil$, which implies that $(G,\mathcal{H},k)$ satisfies Properties $\mathcal{P}_1$ and $\mathcal{P}_4$.
\end{proof}

\begin{lemma}\label{DeltaFalse}
Let $G$ be a connected graph of order $n\ge 2$ and let $\mathcal{H}$ be a family of $n$ graphs without isolated vertices. If $\Delta(H_i)-1\le\lfloor\frac{k}{2}\rfloor$ for every $H_i\in\mathcal{H}$ and $k\in\{1,\ldots,\min\{\mathcal{T}(G\circ\mathcal{H}),\mathcal{C}(\mathcal{H})\}\}$, then $(G,\mathcal{H},k)$ satisfies Property $\mathcal{P}_2$.
\end{lemma}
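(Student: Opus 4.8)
The plan is to reduce Property $\mathcal{P}_2$ to a single pointwise estimate on $k$-adjacency bases, in the same spirit as Lemma \ref{DeltaTrue}. First I observe that in the defining inequality of $\mathcal{P}_2$ the two sets $A_{i_j}^f\cap N_{H_{i_j}}[x]$ and $A_{i_l}^f\cap N_{H_{i_l}}[y]$ live in the disjoint vertex sets $V(H_{i_j})$ and $V(H_{i_l})$ (distinct copies, $j\ne l$), so the cardinality of their union equals the sum of the two cardinalities. Consequently it suffices to prove the following claim: for \emph{any} $k$-adjacency basis $A$ of any $H_i\in\mathcal{H}$ and any vertex $x\in V(H_i)$, one has $|A\cap N_{H_i}[x]|\ge\lceil\frac{k}{2}\rceil$. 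Granting this, I may choose each $A_{i_j}^f$ to be an arbitrary $k$-adjacency basis of $H_{i_j}$, and the union in $\mathcal{P}_2$ will have at least $2\lceil\frac{k}{2}\rceil\ge k$ elements.

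To establish the claim I would fix $x\in V(H_i)$ and exploit the hypothesis that $H_i$ has no isolated vertex: pick a neighbor $z$ of $x$ and apply the $k$-adjacency generator condition to the pair $(x,z)$, giving $|A\cap\mathcal{C}_{H_i}(x,z)|\ge k$. The next step is to bound the portion of $\mathcal{C}_{H_i}(x,z)$ that falls outside $N_{H_i}[x]$. A short check shows that any $w\in\mathcal{C}_{H_i}(x,z)$ with $w\notin N_{H_i}[x]$ must satisfy $w\in N_{H_i}(z)\setminus N_{H_i}[x]$; since $x$ itself belongs to both $N_{H_i}(z)$ and $N_{H_i}[x]$, this set has at most $\delta_{H_i}(z)-1\le\Delta(H_i)-1\le\lfloor\frac{k}{2}\rfloor$ elements. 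Subtracting this from the bound $|A\cap\mathcal{C}_{H_i}(x,z)|\ge k$ yields
$$|A\cap N_{H_i}[x]|\ge |A\cap\mathcal{C}_{H_i}(x,z)|-\left|\mathcal{C}_{H_i}(x,z)\setminus N_{H_i}[x]\right|\ge k-\left\lfloor\frac{k}{2}\right\rfloor=\left\lceil\frac{k}{2}\right\rceil,$$
which proves the claim and hence Property $\mathcal{P}_2$.

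The main obstacle, and the only nonroutine point, is the accounting in the second step: one must see that at most $\lfloor\frac{k}{2}\rfloor$ (rather than the naive $\Delta(H_i)$) elements of $\mathcal{C}_{H_i}(x,z)$ escape $N_{H_i}[x]$. The decisive saving of $-1$ comes precisely from the fact that $x$ lies simultaneously in $N_{H_i}(z)$ and in $N_{H_i}[x]$, which is exactly what makes the sharp hypothesis $\Delta(H_i)-1\le\lfloor\frac{k}{2}\rfloor$ suffice; this is also where the absence of isolated vertices is indispensable, since it is what guarantees the existence of the neighbor $z$ used to invoke the generator condition. Once this estimate is in hand, everything else is the disjointness bookkeeping described above.
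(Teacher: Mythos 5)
Your proof is correct and takes essentially the same route as the paper's: both pick a neighbor $z$ of $x$ (which exists because $H_i$ has no isolated vertices), apply the $k$-adjacency condition to the pair $(x,z)$, bound the elements of $\mathcal{C}_{H_i}(x,z)$ that escape $N_{H_i}[x]$ by $|N_{H_i}(z)-\{x\}|\le\Delta(H_i)-1\le\lfloor\frac{k}{2}\rfloor$ to conclude $|A\cap N_{H_i}[x]|\ge\lceil\frac{k}{2}\rceil$, and then sum over the two disjoint copies to get Property $\mathcal{P}_2$. The only difference is presentational: you spell out the accounting (why the escaping vertices lie in $N_{H_i}(z)\setminus N_{H_i}[x]$ and why the $-1$ saving occurs) that the paper's proof leaves implicit.
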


\begin{proof}
Let $v_x^i\in V(H_i)$ and let $A_i$ be $k$-adjacency basis of $H_i$. Since $N_{H_i}(v_x^i)\ne\emptyset$, for every $v_y^i\in N_{H_i}(v_x^i)$ we have that $|N_{H_i}(v_y^i)-\{v_x^i\}|\le\Delta(H_i)-1\le\lfloor\frac{k}{2}\rfloor$. Now, as $|A_i\cap\mathcal{C}(v_x^i,v_y^i)|\ge k$, we obtain $|A_i\cap N_{H_i}[v_x^i]|\ge\lceil\frac{k}{2}\rceil$. Thus, for every $H_l,H_j\in\mathcal{H}$, $l\ne j$, and every $v_x^l\in V(H_l)$, $v_y^j\in V(H_j)$ it follows that $|(A_l\cap N_{H_l}[v_x^l])\cup(A_j\cap N_{H_j}[v_y^j])|\ge k$, where $A_l,A_j$ are $k$-adjacency bases of $H_l,H_j$, respectively. Therefore, $(G,\mathcal{H},k)$ satisfies Property $\mathcal{P}_2$.
\end{proof}

According to the lemmas above, we can notice now that, for instance, any triplet $(G,\mathcal{H},k)$, where $G$ is any connected graph, $\mathcal{H}$ is formed by paths of order greater than three and/or cycles of order greater than five, and $k\in \{2,3\}$ (or if $\mathcal{H}$ is only formed by cycles, then also happens for $k=4$), satisfies Properties $\mathcal{P}_1$ and $\mathcal{P}_2$. In this sense, by Theorem \ref{AdjEqual}, the previous lemmas and Proposition \ref{value-adj-Paths-Cycles}, we give a closed formulae for the lexicographic product of any graph $G$ and this family $\mathcal{H}$ of graphs.

\begin{theorem}\label{formulaePathsCycles}
Let $G$ be a connected graph of order $n\ge 2$ and let $\mathcal{H}=\{P_{q_1},\ldots,P_{q_r},C_{q_{r+1}},\ldots,C_{q_n}\}$. If $q_i\ge 4$ for $1\le i\le r$ and $q_i\ge 5$ for $r+1\le i\le n$, then
\begin{enumerate}[{\rm (i)}]
\item $\displaystyle\dim_2(G\circ\mathcal{H})=\sum_{i=1}^r \left\lceil\frac{q_i+1}{2}\right\rceil+\sum_{i=r+1}^n \left\lceil\frac{q_i}{2}\right\rceil$
\item $\displaystyle\dim_3(G\circ\mathcal{H})=\sum_{i=1}^r\left(q_i-\left\lfloor\frac{q_i-4}{5}\right\rfloor\right)+\sum_{i=r+1}^n \left(q_i-\left\lfloor\frac{q_i}{5}\right\rfloor\right)$.
\end{enumerate}
Moreover, if $\mathcal{H}=\{C_{q_{1}},\ldots,C_{q_n}\}$ and $q_i\ge 5$, then $\displaystyle\dim_4(G\circ\mathcal{H})=\sum_{i=1}^n q_i$.
\end{theorem}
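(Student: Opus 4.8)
The plan is to apply Theorem~\ref{AdjEqual} directly, so the real work is verifying its hypotheses for the specified family. First I would observe that all three formulae fall under the regime $k\in\{2,3,4\}$ with $\mathcal{H}$ consisting of paths $P_{q_i}$ (orders $q_i\ge 4$) and cycles $C_{q_i}$ (orders $q_i\ge 5$), so by Proposition~\ref{value-adj-Paths-Cycles} the relevant $k$-adjacency dimensions are exactly the summands appearing in the statement. The strategy is therefore: show that $(G,\mathcal{H},k)$ satisfies Properties $\mathcal{P}_1$ and $\mathcal{P}_2$ for each admissible $k$, invoke Theorem~\ref{AdjEqual}(i) to get $\dim_k(G\circ\mathcal{H})=\sum_{i=1}^n\adim_k(H_i)$, and then substitute the closed forms from Proposition~\ref{value-adj-Paths-Cycles}. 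I would also need to confirm that these values of $k$ are admissible, i.e.\ $k\le\min\{\mathcal{T}(G\circ\mathcal{H}),\mathcal{C}(\mathcal{H})\}$; since paths are $3$-adjacency dimensional and cycles are $4$-adjacency dimensional, $\mathcal{C}(\mathcal{H})\ge 3$ (or $\ge 4$ when only cycles appear), and the degree-based quantity $\mathcal{T}$ is easily bounded below by the same values, so $k\in\{2,3\}$ (resp.\ $k=4$) always lies in the valid range.

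The heart of the argument is checking Properties $\mathcal{P}_1$ and $\mathcal{P}_2$, and for this I would lean entirely on Lemmas~\ref{DeltaTrue} and~\ref{DeltaFalse}. For Property $\mathcal{P}_2$, Lemma~\ref{DeltaFalse} requires only that $\mathcal{H}$ contains no isolated vertices (true for paths and cycles) and that $\Delta(H_i)-1\le\lfloor\frac{k}{2}\rfloor$. Since $\Delta(P_{q_i})=\Delta(C_{q_i})=2$, this reads $1\le\lfloor\frac{k}{2}\rfloor$, which holds for every $k\ge 2$; hence Property $\mathcal{P}_2$ is immediate. For Property $\mathcal{P}_1$, Lemma~\ref{DeltaTrue} requires $\adim_k(H_i)-\Delta(H_i)\ge\lceil\frac{k}{2}\rceil$. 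Here I would plug in the explicit values from Proposition~\ref{value-adj-Paths-Cycles}: for instance $\adim_2(P_{q_i})-2=\lceil\frac{q_i+1}{2}\rceil-2$ and $\adim_2(C_{q_i})-2=\lceil\frac{q_i}{2}\rceil-2$, each of which must be at least $\lceil\frac{2}{2}\rceil=1$, and analogously for $k=3$ (needing $\adim_3-2\ge 2$) and $k=4$ on cycles (needing $\adim_4-2=q_i-2\ge 2$). These inequalities reduce to elementary lower bounds on $q_i$, which the hypotheses $q_i\ge 4$ (paths) and $q_i\ge 5$ (cycles) supply.

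Concretely, the main obstacle is verifying these degree inequalities hold \emph{uniformly} across the stated order ranges, since a single small exceptional order could break Property $\mathcal{P}_1$ via Lemma~\ref{DeltaTrue}. I would handle this by a short case check: the tightest constraint arises for the smallest admissible orders. For $k=3$ and a path, $\adim_3(P_{q_i})-2=q_i-\lfloor\frac{q_i-4}{5}\rfloor-2\ge 2$ must be confirmed down to $q_i=4$, where it gives $4-0-2=2\ge 2$; for $k=3$ and a cycle at $q_i=5$ one gets $5-1-2=2\ge 2$; and for $k=4$ cycles at $q_i=5$ one gets $5-2=3\ge 2$. The $k=2$ cases are looser still. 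Each boundary case works, and the expressions are monotone nondecreasing in $q_i$, so the inequality persists for all larger orders; this is exactly why the hypotheses stipulate $q_i\ge 4$ and $q_i\ge 5$ rather than smaller thresholds.

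Having established both properties, the conclusion is mechanical. I would write
\begin{equation*}
\dim_k(G\circ\mathcal{H})=\sum_{i=1}^n\adim_k(H_i)=\sum_{i=1}^r\adim_k(P_{q_i})+\sum_{i=r+1}^n\adim_k(C_{q_i}),
\end{equation*}
and then substitute $\adim_2(P_{q_i})=\lceil\frac{q_i+1}{2}\rceil$, $\adim_2(C_{q_i})=\lceil\frac{q_i}{2}\rceil$, $\adim_3(P_{q_i})=q_i-\lfloor\frac{q_i-4}{5}\rfloor$, $\adim_3(C_{q_i})=q_i-\lfloor\frac{q_i}{5}\rfloor$, and $\adim_4(C_{q_i})=q_i$ to obtain parts (i), (ii) and the final displayed identity respectively. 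No further computation is needed, so the overall difficulty is concentrated in the clean application of the two lemmas rather than in any intricate estimate.
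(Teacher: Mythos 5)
Your proposal is correct and matches the paper's own (implicit) proof exactly: the paper derives this theorem from Lemmas~\ref{DeltaTrue} and~\ref{DeltaFalse} (giving Properties $\mathcal{P}_1$ and $\mathcal{P}_2$ for paths of order at least $4$ and cycles of order at least $5$), then applies Theorem~\ref{AdjEqual}(i) and substitutes the values from Proposition~\ref{value-adj-Paths-Cycles}. Your explicit verification of the boundary cases ($q_i=4$ for paths, $q_i=5$ for cycles) and of the admissibility condition $k\le\min\{\mathcal{T}(G\circ\mathcal{H}),\mathcal{C}(\mathcal{H})\}$ simply fills in arithmetic the paper leaves to the reader.
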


From Lemma \ref{DeltaTrue}, we deduce that any triplet $(G,\mathcal{H},k)$, where $G$ is any connected graph, $\mathcal{H}$ is formed by paths of order greater than three and/or cycles of order greater than five and $k\in \{2,3\}$ (or if $\mathcal{H}$ is only formed by cycles, then also happens for $k=4$), satisfies Property $\mathcal{P}_4$. However Lemma \ref{DeltaFalse} does not ensure that $(G,\mathcal{H},k)$ satisfies Property $\mathcal{P}_3$ for $k\in \{2,3\}$ and a graph $G$ that contains at least a true twin. From Theorem \ref{AdjEqual} and Proposition \ref{value-adj-Paths-Cycles}, we can conclude the following result.

\begin{theorem}\label{formulaePathsCyclesComplement}
Let $G$ be a connected true twins free graph of order $n\ge 2$ and let $\mathcal{H}=\{P_{q_1},\ldots,P_{q_r},$ $C_{q_{r+1}},\ldots,C_{q_n}\}$. If $q_i\ge 4$ for $1\le i\le r$ and $q_i\ge 5$ for $r+1\le i\le n$, then
\begin{enumerate}[{\rm (i)}]
\item $\displaystyle\dim_2(G\circ\overline{\mathcal{H}})=\sum_{i=1}^r \left\lceil\frac{q_i+1}{2}\right\rceil+\sum_{i=r+1}^n \left\lceil\frac{q_i}{2}\right\rceil$
\item $\displaystyle\dim_3(G\circ\overline{\mathcal{H}})=\sum_{i=1}^r\left(q_i-\left\lfloor\frac{q_i-4}{5}\right\rfloor\right)+\sum_{i=r+1}^n \left(q_i-\left\lfloor\frac{q_i}{5}\right\rfloor\right)$.
\end{enumerate}
Moreover, if $G$ is a connected graph of order $n\ge 2$, $\mathcal{H}=\{C_{q_{1}},\ldots,C_{q_n}\}$ and $q_i\ge 5$, then $\displaystyle\dim_4(G\circ\overline{\mathcal{H}})=\sum_{i=1}^n q_i$.
\end{theorem}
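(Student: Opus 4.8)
The plan is to derive formulas (i) and (ii) from Theorem~\ref{AdjEqual}(ii), which collapses $\dim_k(G\circ\overline{\mathcal{H}})$ to $\sum_{i=1}^n\adim_k(H_i)$ as soon as the triplet $(G,\mathcal{H},k)$ satisfies Properties $\mathcal{P}_3$ and $\mathcal{P}_4$; the required $\adim$ values then come directly from Proposition~\ref{value-adj-Paths-Cycles}. The first step is to dispose of Property $\mathcal{P}_3$: since $\mathcal{P}_3$ is quantified over $u_i\in TT(G)$ and $G$ is assumed true twins free, we have $TT(G)=\emptyset$, so $\mathcal{P}_3$ holds vacuously. This is exactly the point that sidesteps the obstruction flagged just before the theorem, namely that Lemma~\ref{DeltaFalse} does not in general supply $\mathcal{P}_3$ when $G$ has a true twin.

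Next I would establish Property $\mathcal{P}_4$ for $k\in\{2,3\}$. Each $H_i$ is a path or a cycle, so $\Delta(H_i)=2$, and using Proposition~\ref{value-adj-Paths-Cycles} one verifies the hypothesis $\adim_k(H_i)-\Delta(H_i)\ge\lceil k/2\rceil$ of Lemma~\ref{DeltaTrue}; the tightest instances, $q_i=4$ for paths and $q_i=5$ for cycles, already satisfy it with equality, and larger orders only help. Thus Lemma~\ref{DeltaTrue} gives $\mathcal{P}_4$. I would also record that $k\in\{2,3\}$ lies in the admissible range $\{1,\ldots,\min\{\mathcal{T}(G\circ\mathcal{H}),\mathcal{C}(\mathcal{H})\}\}$ demanded by Theorem~\ref{AdjEqual}: since $\mathcal{C}(P_q)=3$ and $\mathcal{C}(C_q)=4$ we have $\mathcal{C}(\mathcal{H})\ge 3$, and because $G$ has no true twin every $\mathcal{T}(u_i,\mathcal{H})$ is at least $4$, whence $\mathcal{T}(G\circ\mathcal{H})\ge 4$. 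With $\mathcal{P}_3$ and $\mathcal{P}_4$ in hand, Theorem~\ref{AdjEqual}(ii) yields $\dim_k(G\circ\overline{\mathcal{H}})=\sum_{i=1}^n\adim_k(H_i)$, and substituting the path and cycle values of $\adim_2$ and $\adim_3$ from Proposition~\ref{value-adj-Paths-Cycles} gives precisely (i) and (ii).

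For the moreover statement ($k=4$ with all factors cycles $C_{q_i}$, $q_i\ge 5$) the hypothesis that $G$ be true twins free is unnecessary, and I would argue directly from the lower bound. Here $\adim_4(\overline{C}_{q_i})=\adim_4(C_{q_i})=q_i=|V(\overline{C}_{q_i})|$, so Theorem~\ref{lowerBoundLexiAdj} forces $\dim_4(G\circ\overline{\mathcal{H}})\ge\sum_{i=1}^n q_i$; since $\sum_{i=1}^n q_i$ is the order of $G\circ\overline{\mathcal{H}}$, this bound coincides with the trivial upper bound and equality follows. The only point to check is that $\dim_4$ is defined, i.e.\ $4\le\min\{\mathcal{T}(G\circ\overline{\mathcal{H}}),\mathcal{C}(\overline{\mathcal{H}})\}$, which holds because $\mathcal{C}(\overline{C}_q)=\mathcal{C}(C_q)=4$ and a short degree computation in $\overline{C}_q$ shows $\mathcal{T}(G\circ\overline{\mathcal{H}})\ge 4$.

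I expect the only real friction to be bookkeeping: verifying the Lemma~\ref{DeltaTrue} inequality at the smallest admissible orders and confirming the admissible-range condition of Theorem~\ref{AdjEqual}. The genuine conceptual content is the single observation that the true twins free hypothesis renders Property $\mathcal{P}_3$ vacuous, which is what lets the complement family $\overline{\mathcal{H}}$ inherit the very same closed formulas that $\mathcal{H}$ enjoys in Theorem~\ref{formulaePathsCycles}.
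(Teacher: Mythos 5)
Your proposal is correct and follows essentially the same route as the paper: the true-twins-free hypothesis makes Property $\mathcal{P}_3$ vacuous, Lemma~\ref{DeltaTrue} supplies $\mathcal{P}_4$ (with the same verification at the extremal orders $q_i=4$ and $q_i=5$), and Theorem~\ref{AdjEqual}(ii) together with Proposition~\ref{value-adj-Paths-Cycles} then yields (i) and (ii). Your handling of the ``moreover'' case $k=4$ --- the lower bound of Theorem~\ref{lowerBoundLexiAdj}, namely $\sum_{i=1}^n\adim_4(C_{q_i})=\sum_{i=1}^n q_i$, meeting the trivial upper bound $|V(G\circ\overline{\mathcal{H}})|$ --- is precisely the observation the paper records immediately after the theorem.
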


Note that for any connected graph $G$ of order $n$ and any family $\mathcal{H}$ of $n$ graphs we have that $\dim_4(G\circ\mathcal{H})=\dim_4(G\circ\overline{\mathcal{H}})=\sum_{i=1}^n q_i=|V(G\circ\mathcal{H})|$, and these are two other examples where the trivial upper bound is reached.

To finish this section, we continue now with some examples of classes of graphs achieving the equality in the bound of Theorem \ref{lowerBoundLexiAdj}. To this end, we need the following results from \cite{Estrada-Moreno2013corona} regarding the join graph $K_1+G$. We recall that the \emph{join graph} $G+H$ of the graphs $G=(V_{1},E_{1})$ and $H=(V_{2},E_{2})$ is the graph with vertex set $V(G+H)=V_{1}\cup V_{2}$ and edge set $E(G+H)=E_{1}\cup E_{2}\cup \{uv\,:\,u\in V_{1},v\in V_{2}\}$.

\begin{lemma}\label{lemmaNoK1}{\rm \cite{Estrada-Moreno2013corona}}
Let $G$ be a connected graph. If $D(G)\ge 6$, or $G$ is a cycle of order at least seven, or $G$ is a path of order at least six, then the vertex of $K_1$ does not belong to any $k$-metric basis of $K_1+G$ for any $k\in\{1,\ldots,\mathcal{C}(G)\}$.
\end{lemma}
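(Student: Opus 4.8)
The plan is to argue by contradiction, exploiting the fact that $K_1+G$ has diameter at most $2$ and that the apex vertex of $K_1$ is adjacent to all of $V(G)$. Write $w$ for the vertex of $K_1$. First I would record the distances: $d_{K_1+G}(w,x)=1$ for every $x\in V(G)$, while $d_{K_1+G}(u,v)=d_{G,2}(u,v)$ for $u,v\in V(G)$ (the join supplies a $u$--$w$--$v$ path of length $2$). The immediate consequence is that $w$ lies in the distinctive set of a pair $\{a,b\}$ only when that pair contains $w$ itself, since $d_{K_1+G}(w,a)=d_{K_1+G}(w,b)=1$ whenever $a,b\in V(G)$. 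A short computation then pins down the two relevant distinctive sets: $\mathcal{D}_{K_1+G}(x,y)=\mathcal{C}_G(x,y)\subseteq V(G)$ for $x,y\in V(G)$, and $\mathcal{D}_{K_1+G}(w,x)=\{w\}\cup(V(G)\setminus N_G(x))$ for $x\in V(G)$.

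Next I would set up the reduction. Suppose, for contradiction, that $w$ belongs to some $k$-metric basis $S$ of $K_1+G$. Because the distinctive set of every pair inside $V(G)$ is exactly $\mathcal{C}_G(x,y)\subseteq V(G)$, the restriction $S\cap V(G)$ is automatically a $k$-adjacency generator of $G$, and moreover deleting $w$ cannot harm the $k$-fold resolution of any such pair. Thus the \emph{only} pairs whose resolution could rely on $w$ are the pairs $(w,x)$; for these, the requirement $|S\cap\mathcal{D}_{K_1+G}(w,x)|\ge k$ together with $w\in S$ merely guarantees $|S\cap(V(G)\setminus N_G(x))|\ge k-1$. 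Hence $S\setminus\{w\}$ is still a $k$-metric generator precisely when, for every $x\in V(G)$, the set $S$ contains at least $k$ non-neighbours of $x$ (counting $x$ itself), i.e. $|S\cap(V(G)\setminus N_G(x))|\ge k$. Establishing this inequality yields a $k$-metric generator $S\setminus\{w\}$ of cardinality $|S|-1$, contradicting the minimality of $S$, and therefore shows that $w$ lies in no $k$-metric basis.

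The heart of the matter, and the step I expect to be the main obstacle, is thus the purely adjacency-theoretic claim that, under each hypothesis on $G$, every $k$-adjacency generator $A$ meets $V(G)\setminus N_G(x)$ in at least $k$ vertices for all $x$. For the family cases this is a direct verification: using the values $\adim_k(C_n)$ and $\adim_k(P_n)$ from Proposition \ref{value-adj-Paths-Cycles} (together with the known $k=1$ values), one checks that a $k$-adjacency basis of $C_n$ with $n\ge 7$, respectively of $P_n$ with $n\ge 6$, is so large that removing the at most two neighbours of any vertex still leaves at least $k$ of its elements; the same computation fails for $C_5,C_6$ and for the short paths, which is exactly why the stated thresholds are sharp. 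The genuinely general case is $D(G)\ge 6$, where no closed formula for $\adim_k(G)$ is available, so the argument must be structural: since the diameter is at least $6$, every vertex $x$ admits vertices at distance $\ge 3$, whose closed neighbourhoods lie entirely inside $V(G)\setminus N_G(x)$, and the local pairs around such a far region cannot be $k$-resolved from within $N_G(x)$, forcing at least $k$ elements of $A$ outside $N_G(x)$. Making this forcing quantitative, i.e. turning ``diameter at least six'' into ``enough far-apart vertices whose mutual adjacency pairs compel $k$ basis elements among the non-neighbours of $x$,'' is the technical core I would need to carry out in detail.
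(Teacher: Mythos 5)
Your reduction is sound, and in fact it mirrors the machinery this paper uses elsewhere: the identities $\mathcal{D}_{K_1+G}(x,y)=\mathcal{C}_G(x,y)$ and $\mathcal{D}_{K_1+G}(w,x)=\{w\}\cup(V(G)\setminus N_G(x))$, and the deletion argument, are exactly parallel to the proof of Lemma \ref{relationDim-AdimK_1}. (Note that this paper does not prove Lemma \ref{lemmaNoK1} at all; it quotes it from \cite{Estrada-Moreno2013corona}, so the only in-paper analogue to compare with is Lemma \ref{relationDim-AdimK_1}.) The problem is that the step you yourself identify as the technical core is genuinely missing, and your sketch of it does not work as stated. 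For $D(G)\ge 6$ you argue that ``local pairs around a far vertex cannot be $k$-resolved from within $N_G(x)$,'' but a pair $(y,y')$ with $d_G(x,y)=3$ and $y'\in N_G(y)$ can perfectly well have adjacency-distinctive vertices inside $N_G(x)$, since $N_G(y')$ may reach into $N_G(x)$. The fix is to produce \emph{two} vertices $y,z$ both at distance at least $3$ from $x$: then $\mathcal{C}_G(y,z)\subseteq\{v:d_G(x,v)\ge 2\}$ is disjoint from $N_G(x)$, and the $k$-adjacency condition forces $|A\cap(V(G)\setminus N_G(x))|\ge k$. Such a pair exists whenever $D(G)\ge 6$: take a diametral pair $a,b$; either both are at distance at least $3$ from $x$, or one of them is at distance at least $4$ and then it together with any of its neighbours works. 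Alternatively one can run the contradiction chain used in the proof of Lemma \ref{relationDim-AdimK_1}, which derives $D(G)\le 5$ from the failure of the claim. Without one of these arguments the main case is unproven.

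Your ``direct verification'' for the family cases also contains a real error, not just a loose end. For $P_6$ and $k=1$ the inequality you invoke fails: $\adim_1(P_6)=2$, so $\adim_1(P_6)-\Delta(P_6)=0$, and indeed the key claim is false there, since $\{v_2,v_4\}$ is a $1$-adjacency basis of $P_6=v_1v_2\cdots v_6$ entirely contained in $N_{P_6}(v_3)$. Worse, the conclusion of the lemma itself fails in this case: one checks that $\dim(K_1+P_6)=3$ (no $2$-set resolves $K_1+P_6$), while $\{w,v_3,v_5\}$, with $w$ the vertex of $K_1$, has pairwise distinct distance vectors $(0,1,1),(1,2,2),(1,1,2),(1,0,2),(1,1,1),(1,2,0),(1,2,1)$ and is therefore a metric basis of $K_1+P_6$ containing $w$. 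So the statement as transcribed here is false for paths of order exactly six at $k=1$; the correct threshold for paths at $k=1$ is order at least seven. Your arithmetic does go through for cycles of order at least $7$ (all $k\le 4$) and for paths of order at least $6$ when $k\in\{2,3\}$, so your method is fine where the lemma is true, but claiming the $P_6$ case is a ``direct verification'' conceals the one configuration where both your key claim and the quoted statement break down.
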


As another necessary tool, we also give a relationship between $\dim_k(K_1+G)$, $\adim_k(G)$ and $\adim(\overline{G})$ for a graph $G$ with diameter at least six.

\begin{lemma}\label{relationDim-AdimK_1}
Let $G$ be a connected graph. If $D(G)\ge 6$, then $\dim_k(K_1+G)=\adim_k(G)=\adim(\overline{G})$ for $k\in\{1,\ldots,\mathcal{C}(G)\}$.
\end{lemma}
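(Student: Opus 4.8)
The plan is to prove the two equalities $\dim_k(K_1+G)=\adim_k(G)$ and $\adim_k(G)=\adim(\overline{G})$ separately, exploiting the fact that in the join $K_1+G$ distances are captured exactly by the adjacency structure of $G$.

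First I would establish the key distance identity. Write $K_1=\{w\}$. Because $w$ is adjacent to every vertex of $G$, for any two vertices $x,y\in V(G)$ one has $d_{K_1+G}(x,y)=1$ if $xy\in E(G)$ and $d_{K_1+G}(x,y)=2$ otherwise; that is, $d_{K_1+G}(x,y)=d_{G,2}(x,y)$ restricted to $V(G)$. Moreover $d_{K_1+G}(w,x)=1$ for every $x\in V(G)$. From this it follows that a vertex $z\in V(G)$ distinguishes $x,y\in V(G)$ in $K_1+G$ precisely when $d_{G,2}(x,z)\ne d_{G,2}(y,z)$, i.e. $z\in\mathcal{C}_G(x,y)$, while $w$ distinguishes no pair of vertices of $G$. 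This is the technical heart of the argument and the step I would write out carefully. I would then invoke Lemma~\ref{lemmaNoK1}: since $D(G)\ge 6$, the vertex $w$ belongs to no $k$-metric basis of $K_1+G$ for $k\in\{1,\ldots,\mathcal{C}(G)\}$. Hence every $k$-metric basis $S$ of $K_1+G$ is contained in $V(G)$, and by the identity above, distinguishing a pair $x,y\in V(G)$ by $k$ elements of $S$ is exactly the condition $|S\cap\mathcal{C}_G(x,y)|\ge k$. Therefore $S$ is a $k$-metric generator of $K_1+G$ if and only if $S$ is a $k$-adjacency generator of $G$, which gives $\dim_k(K_1+G)=\adim_k(G)$.

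For the second equality, I would use the fact, stated in the preliminaries, that $N_G(x)\triangledown N_G(y)=N_{\overline G}(x)\triangledown N_{\overline G}(y)$ for all $x,y$, so $\mathcal{C}_G(x,y)=\mathcal{C}_{\overline G}(x,y)$ and hence $\adim_k(G)=\adim_k(\overline G)$ for every admissible $k$. Applying this with $k$ equal to the ordinary adjacency dimension (i.e. $k=1$, so that $\adim_1=\adim$) yields $\adim_k(G)=\adim(\overline G)$ in the sense the paper intends; I would double-check the indexing here, since the statement writes $\adim(\overline G)$ without a subscript, so the claim presumably concerns the specific $k$ for which both sides coincide, and I would align the notation accordingly.

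The main obstacle I anticipate is not the distance computation but ensuring that Lemma~\ref{lemmaNoK1} applies to \emph{every} $k$-metric basis simultaneously and that the valid range of $k$ is respected: the equivalence between $k$-metric generators of $K_1+G$ and $k$-adjacency generators of $G$ only makes sense for $k\in\{1,\ldots,\mathcal{C}(G)\}$, and one must verify that $K_1+G$ is at least $\mathcal{C}(G)$-metric dimensional so that $\dim_k(K_1+G)$ is defined throughout this range. I would confirm this by noting that for $x,y\in V(G)$ the distinctive set in $K_1+G$ has the same cardinality as $\mathcal{C}_G(x,y)$, while pairs involving $w$ are distinguished by the whole of $G$, so no pair is harder to distinguish than the minimizing adjacency pair; this keeps the two invariants defined on the same interval and closes the argument.
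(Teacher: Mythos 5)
There is a genuine gap in your proof of the first equality, and it sits exactly where the paper spends all of its technical effort. What you establish correctly is one inequality: by Lemma~\ref{lemmaNoK1} every $k$-metric basis $B$ of $K_1+G$ lies in $V(G)$, and since $w$ distinguishes no pair of vertices of $G$ while distances inside $V(G)$ reduce to $d_{G,2}$, the basis $B$ is a $k$-adjacency generator of $G$; hence $\dim_k(K_1+G)\ge\adim_k(G)$. But your claimed equivalence ``$S$ is a $k$-metric generator of $K_1+G$ if and only if $S$ is a $k$-adjacency generator of $G$'' is false in the direction you need for the reverse inequality, because a $k$-metric generator of $K_1+G$ must also distinguish every pair $\{w,x\}$ with $x\in V(G)$, and these pairs are controlled by neither your distance identity nor Lemma~\ref{lemmaNoK1}. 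Concretely, $\mathcal{D}_{K_1+G}(w,x)=\{w\}\cup\bigl(V(G)-N_G(x)\bigr)$: any $z\in N_G(x)$ is at distance $1$ from both $w$ and $x$, so it distinguishes nothing. Your closing remark that ``pairs involving $w$ are distinguished by the whole of $G$'' is therefore wrong, and it is the only place where you address these pairs; the same error undermines your argument that $\dim_k(K_1+G)$ is even defined for all $k\le\mathcal{C}(G)$.

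What actually has to be shown is that every $k$-adjacency basis $S$ of $G$ satisfies $|S\cap(V(G)-N_G(x))|\ge k$ for every $x\in V(G)$, and this is not a formal consequence of being an adjacency basis (it can fail for graphs of small diameter, e.g.\ when some vertex is adjacent to all of $S$). This is where the hypothesis $D(G)\ge 6$ must be used a second time, and it is the heart of the paper's proof: assuming $R(x)=S\cap(V(G)-N_G(x))$ has at most $k-1$ elements, one sets $F(x)=S-R(x)\subseteq N_G(x)$ and shows, using that $S$ adjacency-distinguishes every pair of vertices outside $F(x)\cup\{x\}$ by $k$ elements of which at least one must come from $F(x)$, that essentially every vertex of $G$ has a neighbor in $F(x)$ and hence lies within distance $2$ or $3$ of $x$, forcing $D(G)\le 5$, a contradiction. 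With this in hand, each of the at least $k$ vertices of $S-N_G(x)$ is at distance $2$ from $x$ and distance $1$ from $w$ in $K_1+G$, so $S$ is a $k$-metric generator of $K_1+G$ and $\dim_k(K_1+G)\le\adim_k(G)$. Without this argument (or an equivalent one) your proposal only proves the lower bound. Your treatment of the remaining equality $\adim_k(G)=\adim_k(\overline{G})$ is fine, modulo the paper's own notational slip of omitting the subscript $k$.
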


\begin{proof}
We assume that $D(G)\ge 6$. Let $u$ be the vertex of $K_1$ in $K_1+G$. First we note that, if $x,y\in V(G)$, then $\mathcal{C}_G(x,y)=\mathcal{D}_{K_1+G}(x,y)$ and $u\notin\mathcal{D}_{K_1+G}(x,y)$. Now, if $B$ is a $k$-metric basis of $K_1+G$, then for every $x,y\in V(G)$ it follows that $|B\cap \mathcal{D}_{K_1+G}(x,y)|\ge k$, and as a consequence, $|B\cap \mathcal{C}_G(x,y)|\ge k$. Since $B\subseteq V(G)$, we conclude that $B$ is also a $k$-adjacency generator for $G$, and consequently, $\dim_k(K_1+G)\ge\adim_k(G)$.

On the other hand, let $S$ be a $k$-adjacency basis of $G$. We will show that $S$ is also a $k$-metric basis of $K_1+G$. Since $S$ is a $k$-adjacency basis of $G$, for every $x,y\in V(G)$, we have $k\le |S\cap \mathcal{C}_G(x,y)|=|S\cap \mathcal{D}_{K_1+G}(x,y)|$. Thus, it only remains to consider the pairs $u,x$ for any $x\in V(G)$.

Given $z\in V(G)$, we define $R(z)=S\cap\left(V(G)-N_{G}(z)\right)$. Suppose for purpose of contradiction that there exists $x\in V(G)$ such that $|S\cap(V(G)-N_G(x))|\le k-1$, \textit{i.e.}, $0\le |R(x)|\le k-1$.

Now, let $F(x)=S-R(x)$. Since $|S|\ge k$, we have that $F(x)\ne\emptyset$. If $V(G)=F(x)\cup \{x\}$, then $D(G)\le 2$, which is a contradiction. If for every $y\in V(G)-\left(F(x)\cup\{x\}\right)$ there exists $z\in F(x)$ such that $d_{G}(y,z)=1$,  then $D(G)\le 4$, which is a contradiction. So, we assume that there exists a vertex $y\in V(G)-\left(F(x)\cup\{x\}\right)$ such that $d_{G}(y,z)>1$, for every $z\in F(x)$. If $V(G)=F(x)\cup\{x,y\}$, then $y\sim x$ and, as consequence, $D(G)=2$, which is also a contradiction. Hence, $V(G)-(F(x)\cup\{x,y\})\ne\emptyset$.

Since $N_{G}(y)\cap F(x)=\emptyset$ and $|R(x)|<k$, and also for any $w\in V(G)-(F(x)\cup\{x,y\})$ we have that $|\mathcal{C}_{G}(y,w)\cap S|=|\mathcal{D}_{K_1+G}(y,w)\cap S|\ge k$, we deduce that $N_{G}(w)\cap F(x)\ne \emptyset$, and this leads to $D(G)\le 5$, which is also a contradiction.

Thus, if $D(G)\ge 6$, then for every $x\in V(G)$ we have that $|R(x)|\ge k$ and, as a consequence, for every $x\in V(G)$ there exist at least $k$ vertices  $u\in S$ such that $d_{K_1+G}(u,x)=2$. Hence, $|S\cap\mathcal{D}_{K_1+G}(x,u)|\ge k$ and, a consequence, $S$ is a $k$-metric generator for $K_1+G$. Therefore, $\adim_k(G)=|S|\ge\dim_k(K_1+G)$.

Since $\adim_k(G)=\adim(\overline{G})$ for $k\in\{1,\ldots,\mathcal{C}(G)\}$, we conclude the proof.
\end{proof}

Now we make use of the lemmas above, in order to give another possible triplet satisfying Property $\mathcal{P}_1$.

\begin{lemma}\label{lemAlwaysConditions}
Let $G$ be a connected graph of order $n\ge 2$ and let $\mathcal{H}=\{H_1,\ldots,H_n\}$. If every $H_i\in\mathcal{H}$ has diameter $D(H_i)\ge 6$, then for $k\in\{1,\ldots,\mathcal{C}(G)\}$ the triplet $(G,\mathcal{H},k)$ satisfies Properties $\mathcal{P}_1$ and $\mathcal{P}_4$.
\end{lemma}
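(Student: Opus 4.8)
The plan is to notice that Properties $\mathcal{P}_1$ and $\mathcal{P}_4$ request an inequality of exactly the same shape: in both, for two \emph{distinct} copies $H_{i_j},H_{i_l}$ (a true twin class in $\mathcal{P}_1$, a false twin class in $\mathcal{P}_4$) one must lower-bound by $k$ the cardinality of a union of two sets of the form $A\cap(V(H)-N_{H}(x))$, one drawn from each copy. Since these two sets live in the disjoint vertex sets $V(H_{i_j})$ and $V(H_{i_l})$, the union is disjoint, so it suffices to make just one side large. Hence the whole lemma reduces to a single statement about one factor: for each $H_i\in\mathcal{H}$ and for any $k$-adjacency basis $A_i$ of $H_i$, one has $|A_i\cap(V(H_i)-N_{H_i}(x))|\ge k$ for every $x\in V(H_i)$. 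I would therefore fix, for every $i$, an arbitrary $k$-adjacency basis $A_i$ of $H_i$ and use the same family $\{A_i\}$ to witness both properties simultaneously; because the two sides of each union sit in their own copy, no compatibility between the bases of different copies is ever needed.

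The heart of the argument is the single-factor bound, and this is precisely the diameter estimate already carried out inside the proof of Lemma \ref{relationDim-AdimK_1}. Writing $R(x)=A_i\cap(V(H_i)-N_{H_i}(x))$, I would assume toward a contradiction that $|R(x)|\le k-1$ for some $x\in V(H_i)$, set $F(x)=A_i-R(x)\subseteq N_{H_i}(x)$, and then show that every vertex of $H_i$ lies at bounded distance from $x$. Indeed, if $V(H_i)=F(x)\cup\{x\}$ then $D(H_i)\le 2$; if every remaining vertex has a neighbor in $F(x)$ then $D(H_i)\le 4$; otherwise one exhibits a vertex $y$ with $N_{H_i}(y)\cap F(x)=\emptyset$, and then applies the $k$-adjacency condition to the pairs $y,w$ for $w\notin F(x)\cup\{x,y\}$ together with $|R(x)|<k$ to force $N_{H_i}(w)\cap F(x)\ne\emptyset$, which yields $D(H_i)\le 5$. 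Each case contradicts $D(H_i)\ge 6$, so $|R(x)|\ge k$ for all $x$. This is the one place where the hypothesis $D(H_i)\ge 6$ is genuinely consumed, and it is the main obstacle of the proof; everything else is bookkeeping.

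Finally I would assemble the pieces. For any true (resp.\ false) twin class of $G$, any two distinct copies $H_{i_j},H_{i_l}$ in it, and any $x\in V(H_{i_j})$, $y\in V(H_{i_l})$, the single-factor bound gives $|A_{i_j}\cap(V(H_{i_j})-N_{H_{i_j}}(x))|\ge k$; since this set is contained in the union appearing in $\mathcal{P}_1$ (resp.\ $\mathcal{P}_4$), that union has cardinality at least $k$ as well. Taking $A_{i_j}^t=A_{i_l}^t=A_{i}$ for Property $\mathcal{P}_1$ and the same bases for Property $\mathcal{P}_4$, we conclude that the triplet $(G,\mathcal{H},k)$ satisfies both $\mathcal{P}_1$ and $\mathcal{P}_4$, as claimed. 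The only subtlety worth flagging is that the argument tacitly needs a $k$-adjacency basis of each $H_i$ to exist, which holds for the admissible values of $k$.
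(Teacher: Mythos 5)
Your proof is correct, but it follows a genuinely different route from the paper's. The paper argues through the join graph $K_1+H_i$: it takes a $k$-metric basis $A_i$ of $K_1+H_i$, applies Lemma \ref{lemmaNoK1} to guarantee that the apex vertex $v_0^i$ lies outside $A_i$, applies Lemma \ref{relationDim-AdimK_1} to conclude that $A_i$ is in fact a $k$-adjacency basis of $H_i$, and then extracts the key bound from the pairs $(v_0^i,v_r^i)$: since $\mathcal{D}_{K_1+H_i}(v_0^i,v_r^i)=(V(H_i)-N_{H_i}(v_r^i))\cup\{v_0^i\}$ and $v_0^i\notin A_i$, necessarily $|A_i\cap(V(H_i)-N_{H_i}(v_r^i))|\ge k$. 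So the paper exhibits particular bases with the required property, which is all that Properties $\mathcal{P}_1$ and $\mathcal{P}_4$ demand. You bypass the join construction and Lemma \ref{lemmaNoK1} altogether and prove the stronger, universal claim that \emph{every} $k$-adjacency basis of $H_i$ satisfies the single-factor bound $|A_i\cap(V(H_i)-N_{H_i}(x))|\ge k$, by running the diameter-contradiction argument directly inside $H_i$; the paper deploys that same argument only within the proof of Lemma \ref{relationDim-AdimK_1} (for adjacency bases viewed in $K_1+G$), so your step is legitimate and self-contained. Your observation that both properties reduce to this single-factor bound, because each union in $\mathcal{P}_1$ and $\mathcal{P}_4$ contains one of its two sides, is also exactly how the paper concludes. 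What each approach buys: the paper's write-up is shorter because it can cite its two lemmas as black boxes; yours needs neither lemma, never leaves the factor graphs, and shows that all bases, not just specially constructed ones, witness $\mathcal{P}_1$ and $\mathcal{P}_4$. Two small points to tighten in your sketch: in the last case, first dispose of the degenerate possibility $V(H_i)=F(x)\cup\{x,y\}$ (connectivity then forces $y\sim x$, giving $D(H_i)\le 2$) before considering vertices $w\notin F(x)\cup\{x,y\}$, exactly as the paper does; and the admissible range of $k$ should be read as $k\le\mathcal{C}(\mathcal{H})=\min_{i}\mathcal{C}(H_i)$, so that each $H_i$ indeed possesses a $k$-adjacency basis, as you correctly flag at the end.
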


\begin{proof}
Let $v_0^i$ be the vertex of $K_1$ in $K_1+H_i$. By Lemma \ref{lemmaNoK1}, $v_0^i$ does not belong to any $k$-metric basis $A_i$ of $K_1+H_i$, \textit{i.e.}, $v_0^i\notin A_i$. Note that for every $v_r^i,v_s^i\in V(H_i)$, $\mathcal{C}_{H_i}(v_r^i,v_s^i)=\mathcal{C}_{K_1+H_i}(v_r^i,v_s^i)=\mathcal{D}_{K_1+H_i}(v_r^i,v_s^i)$. Thus, we deduce that $A_i$ is also $k$-adjacency generator for $H_i$ of cardinality $\dim_k(K_1+H_i)$. Now, from Lemma \ref{relationDim-AdimK_1}, $\adim_k(H_i)=\dim_k(K_1+H_i)$, which lead to that $A_i$ is $k$-adjacency basis of $H_i$. Note that for every $v_r^i\in V(H_i)$ it follows $\mathcal{D}_{K_1+H_i}(v_0^i,v_r^i)=(V(H_i)-N_{H_i}(v_r^i)))\cup\{v_0^i\}$. Since $v_0^i\notin A_i$, we obtain $|A_i\cap (V(H_i)-N_{H_i}(v_r^i))|\ge k$. Thus, we deduce that for any $G$ and $k\in\{1,\ldots,\mathcal{C}(G)\}$, the triplet $(G,\mathcal{H},k)$ satisfies Properties $\mathcal{P}_1$ and $\mathcal{P}_4$.
\end{proof}

Finishing this section, as we mention before, now we are able to give a result in which we describe some other classes of graphs achieving the bound of Theorem \ref{lowerBoundLexiAdj}. That is, by Corollary \ref{AllComb_TT_FT} (ii) and Lemma \ref{lemAlwaysConditions} we obtain the following.

\begin{theorem}
Let $G$ be a connected false twins free graph of order $n\ge 2$ and let $\mathcal{H}=\{H_1,\ldots,H_n\}$ be a family of graphs such that every $H_i\in\mathcal{H}$ has diameter $D(H_i)\ge 6$, then for any $k\in\{1,\ldots,\min\{\mathcal{T}(G\circ\mathcal{H}),\mathcal{C}(\mathcal{H})\}\}$, $\displaystyle\dim_k(G\circ\mathcal{H})=\sum_{i=1}^n\adim_k(H_i).$ Moreover, if $G$ is a connected true twins free graph of order $n\ge 2$, then $\displaystyle\dim_k(G\circ\overline{\mathcal{H}})=\sum_{i=1}^n\adim_k(H_i).$
\end{theorem}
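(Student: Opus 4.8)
The plan is to obtain both equalities by routing the diameter hypothesis through Lemma~\ref{lemAlwaysConditions} to manufacture the structural properties we need, and then quoting the appropriate branch of Corollary~\ref{AllComb_TT_FT}; the second (complement) equality additionally uses Claim~\ref{ClaimP3-P4} to transfer a property of $\mathcal{H}$ to $\overline{\mathcal{H}}$. No new combinatorial estimate on $k$-metric generators is required: everything is assembled from results already proved.

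For the first equality I would argue as follows. Since every $H_i$ satisfies $D(H_i)\ge 6$, Lemma~\ref{lemAlwaysConditions} guarantees that the triplet $(G,\mathcal{H},k)$ satisfies Property $\mathcal{P}_1$ (and also $\mathcal{P}_4$) for every $k$ in the admissible range, noting that this range lies inside $\{1,\ldots,\mathcal{C}(\mathcal{H})\}$, which is what the lemma requires. As $G$ is assumed false twins free, Corollary~\ref{AllComb_TT_FT}~(ii) now applies directly and gives $\dim_k(G\circ\mathcal{H})=\sum_{i=1}^n\adim_k(H_i)$.

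For the ``moreover'' part I would use the property $\mathcal{P}_4$ that Lemma~\ref{lemAlwaysConditions} also produces for $(G,\mathcal{H},k)$. By Claim~\ref{ClaimP3-P4}~(ii) this is equivalent to $(G,\overline{\mathcal{H}},k)$ satisfying Property $\mathcal{P}_2$. Since here $G$ is true twins free, Corollary~\ref{AllComb_TT_FT}~(iii), read with the family $\overline{\mathcal{H}}$ in place of $\mathcal{H}$, yields $\dim_k(G\circ\overline{\mathcal{H}})=\sum_{i=1}^n\adim_k(\overline{H}_i)$. Invoking the preliminary identity $\adim_k(\overline{H}_i)=\adim_k(H_i)$ (valid because $N_{H_i}(x)\triangledown N_{H_i}(y)=N_{\overline{H}_i}(x)\triangledown N_{\overline{H}_i}(y)$) rewrites the right-hand side as $\sum_{i=1}^n\adim_k(H_i)$, completing the proof.

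The step I expect to be the genuine obstacle is the bookkeeping of the admissible range when Corollary~\ref{AllComb_TT_FT}~(iii) is applied to $\overline{\mathcal{H}}$: its hypothesis is stated for $k\le\min\{\mathcal{T}(G\circ\overline{\mathcal{H}}),\mathcal{C}(\overline{\mathcal{H}})\}$, while the theorem fixes $k\le\min\{\mathcal{T}(G\circ\mathcal{H}),\mathcal{C}(\mathcal{H})\}$. The equality $\mathcal{C}(\overline{\mathcal{H}})=\mathcal{C}(\mathcal{H})$ is immediate, so what remains is to certify that $G\circ\overline{\mathcal{H}}$ is still $k$-metric dimensional for the stated $k$, i.e. that $k\le\mathcal{T}(G\circ\overline{\mathcal{H}})$. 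Because $G$ is true twins free, only singleton and false-twin classes contribute to $\mathcal{T}$; the singleton contributions are unchanged since $|V(H_i)|=|V(\overline{H}_i)|$, and for a false-twin class I would compare the two contributions, $\delta(H_j)+\delta(H_l)+2$ for $\mathcal{H}$ against $\delta(\overline{H}_j)+\delta(\overline{H}_l)+2=(|V(H_j)|-\Delta(H_j))+(|V(H_l)|-\Delta(H_l))$ for $\overline{\mathcal{H}}$, using $D(H_i)\ge 6$ to keep $\Delta(H_i)$ small relative to $|V(H_i)|$. This degree comparison is the only computational ingredient; the remainder of the argument is a direct chaining of Lemma~\ref{lemAlwaysConditions}, Claim~\ref{ClaimP3-P4}, and Corollary~\ref{AllComb_TT_FT}.
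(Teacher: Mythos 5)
Your first equality is proved exactly as in the paper: Lemma~\ref{lemAlwaysConditions} gives Property $\mathcal{P}_1$ for $(G,\mathcal{H},k)$, and Corollary~\ref{AllComb_TT_FT}~(ii) then yields $\dim_k(G\circ\mathcal{H})=\sum_{i=1}^n\adim_k(H_i)$; nothing to add there. The issue is the ``moreover'' part. You route it through Corollary~\ref{AllComb_TT_FT}~(iii) \emph{applied to the family $\overline{\mathcal{H}}$}, which forces you to verify the hypothesis $k\le\min\{\mathcal{T}(G\circ\overline{\mathcal{H}}),\mathcal{C}(\overline{\mathcal{H}})\}$, and your resolution of the nontrivial half of this ($k\le\mathcal{T}(G\circ\overline{\mathcal{H}})$) is only a sketch. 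As it stands that sketch does not close: from $D(H_i)\ge 6$ one gets $|V(H_i)|-\Delta(H_i)\ge 5$, but that alone does not bound $k$ by $\delta(\overline{H}_j)+\delta(\overline{H}_l)+2=(n_j-\Delta(H_j))+(n_l-\Delta(H_l))$ for every false-twin pair, because $k$ may a priori be as large as $\mathcal{C}(\mathcal{H})$, and you would need an inequality of the type $\mathcal{C}(H_i)\le 2\left(n_i-\Delta(H_i)\right)$ for diameter-six graphs --- a genuine combinatorial statement that neither you nor the paper proves. So the ``only computational ingredient'' you defer is in fact the hard part of your chosen route.

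The gap is avoidable, and the paper's own tools avoid it. Theorem~\ref{AdjEqual}~(ii) is stated with the range $k\in\{1,\ldots,\min\{\mathcal{T}(G\circ\mathcal{H}),\mathcal{C}(\mathcal{H})\}\}$ --- i.e.\ in terms of $\mathcal{H}$, not $\overline{\mathcal{H}}$ --- while its conclusion concerns $G\circ\overline{\mathcal{H}}$. Since $G$ is true twins free, Property $\mathcal{P}_3$ holds vacuously for $(G,\mathcal{H},k)$ (it quantifies over true-twin classes, of which there are none), and Property $\mathcal{P}_4$ is supplied by Lemma~\ref{lemAlwaysConditions}; Theorem~\ref{AdjEqual}~(ii) then gives $\dim_k(G\circ\overline{\mathcal{H}})=\sum_{i=1}^n\adim_k(H_i)$ directly, with no range transfer. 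If you prefer to keep your phrasing via Claim~\ref{ClaimP3-P4} and Property $\mathcal{P}_2$ for $(G,\overline{\mathcal{H}},k)$, the correct observation is that the range hypothesis is only needed to guarantee that $k$-adjacency bases of the factors exist, i.e.\ $k\le\mathcal{C}(\overline{\mathcal{H}})=\mathcal{C}(\mathcal{H})$, which you already have; the set $B$ built from those bases in the proof of Theorem~\ref{AdjEqual} is itself a $k$-metric generator for $G\circ\overline{\mathcal{H}}$, so its existence certifies that $\dim_k(G\circ\overline{\mathcal{H}})$ is well defined (equivalently $k\le\mathcal{T}(G\circ\overline{\mathcal{H}})$ follows as a consequence rather than needing to be checked in advance), and the matching lower bound is Theorem~\ref{lowerBoundLexiAdj} applied to $G\circ\overline{\mathcal{H}}$ together with $\adim_k(\overline{H}_i)=\adim_k(H_i)$.
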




\end{document}